\def\theequation{\AlphAlph{\value{\equation}}}
\definecolor{cadmiumgreen}{rgb}{0.0, 0.42, 0.24}
\newcommand{\JN}[1]{\textcolor{black}{#1}}
\newcommand{\LB}[1]{\textcolor{black}{#1}}
\def\Re{\mathrm{Re}\,}
\newcommand{\R}{\mathbb{R}}
\newcommand{\lb}{\Delta_{\Gamma}}
\newcommand{\pnx}{\partial_{n_x}}
\newcommand{\pt}{\partial_t}
\newcommand{\pn}{\partial_n}
\newcommand{\utot}{u^{\text{tot}}}
\newcommand{\uscat}{u}
\newcommand{\uscatau}{u^{\tau}}
\newcommand{\uscatf}{u^{\text{scat}}}
\newcommand{\uinc}{u^{\text{inc}}}
\newcommand{\ginc}{g^{\text{inc}}}
\newcommand{\Bimp}{B_\text{imp}}
\newcommand{\wginc}{\widehat{g}^{\text{inc}}}
\newcommand{\wuinc}{\widehat{u}^{\text{inc}}}
\newcommand{\wvarphi}{\widehat{\varphi}}
\newcommand{\wpsi}{\widehat{\psi}}
\newcommand{\wu}{\widehat{u}}
\newcommand{\wuscat}{\widehat{u}}
\newcommand{\mH}{\mathcal{H}}
\newcommand{\mX}{\mathcal{X}}
\newcommand{\eps}{\varepsilon}
\newcommand{\norm}[1]{\left\lVert#1\right\rVert}
\newcommand{\abs}[1]{\left|#1\right|}
\newcommand{\snorm}[1]{\left|#1\right|}
\def\bcl{\color{black}}
\def\ecl{\color{black}}
\numberwithin{equation}{section}
\begin{document}
\title{Time-dependent acoustic scattering from generalized impedance boundary conditions via boundary elements and convolution quadrature}
\shorttitle{Scattering from generalized impedance boundary conditions}
\author{%
{\sc
Lehel Banjai \thanks{ Email: L.Banjai@hw.ac.uk},
} \\[2pt]
The Maxwell Institute for Mathematics in the Sciences; School of Mathematical \& Computer \\
Sciences, Heriot-Watt University, EH14 4AS Edinburgh, UK\\[6pt]
{\sc Christian Lubich}\thanks{Email: lubich@na.uni-tuebingen.de}\\[2pt]
Mathematisches Institut, Universität Tübingen, Auf der Morgenstelle, D-72076 Tübingen,\\
Germany \\[6pt]
{\sc and}\\[6pt]
{\sc J\"org Nick}\thanks{Email: nick@na.uni-tuebingen.de}\\[2pt]
Mathematisches Institut, Universit\"at T\"ubingen, Auf der Morgenstelle, D-72076 T\"ubingen,\\
Germany
}
\shortauthorlist{L. Banjai, C. Lubich and J. Nick}


\maketitle
\ \\
\begin{abstract}
\noindent{Generalized impedance boundary conditions, wave equation, exterior domain, boundary elements, convolution quadrature.}
\\ \noindent 
Generalized impedance boundary conditions are effective, approximate boundary conditions that describe scattering of waves in situations where the wave interaction with the material involves multiple scales. In particular, this includes materials with a thin coating (with the thickness of the coating as the small scale) and strongly absorbing materials. For the acoustic scattering from generalized impedance boundary conditions, the approach taken here first determines the Dirichlet and Neumann boundary data from a system of time-dependent boundary integral equations with the usual boundary integral operators, and then the scattered wave is obtained from the Kirchhoff representation. The system of time-dependent boundary integral equations is discretized by boundary elements in space and convolution quadrature in time. The well-posedness of the problem and the stability of the numerical discretization rely on the coercivity of the Calder\'on operator for the Helmholtz equation with frequencies in a complex half-plane. Convergence of optimal order in the natural norms is proved for the full discretization. Numerical experiments illustrate the behaviour of the proposed numerical method.

%
%
\end{abstract}
\section{Introduction}
In this paper we study a numerical approach to compute time-dependent acoustic scattering from obstacles that, due to their particular material properties, yield effective boundary conditions known as {\it generalized impedance boundary conditions}.

 On an exterior domain $\Omega\subset\R^3$, the complement of one or several bounded domains, we consider the acoustic wave equation (with wave speed $c=1$ in appropriate physical units)
\begin{equation}\label{wave-eq}
\pt^2 \utot - \Delta \utot =0 \quad \text{ in the exterior domain }\, \Omega.
\end{equation}
Given an incident wave $\uinc$, which is a solution of the wave equation in $\R^3$ with support in $\Omega$ at time $t=0$, the objective is to compute the scattered wave $\uscatf(\cdot,t)\in H^1(\Omega)$ on a time interval $0\le t \le T$ (possibly only at a few selected space points $x\in\Omega$), such that the total wave $\utot=\uscatf+\uinc$ is a solution to the wave equation \eqref{wave-eq} that satisfies the specified boundary conditions on the boundary $\Gamma=\partial\Omega$. \LB{As we will be devising a numerical method for the computation of the scattered field $\uscatf$, we will from now on denote it simply by $u = \uscatf$.}

The first boundary condition we consider is a model for a material with a {\it thin coating} of width $\eps$.  The time-dependent version of an effective boundary condition, given by \cite{EN93} for the time-harmonic case, reads (with $\pn u$ denoting the outer normal derivative)
\begin{align*}
\text{(A)} \qquad\qquad\qquad \pn \utot&=\eps(\pt^2\utot -\lb \utot) \quad\text{ on }\Gamma.\qquad\qquad
\end{align*}
An effective boundary condition for a {\it strongly absorbing material}, as studied by  \cite{NN15}, is given by
\begin{equation*}
\text{(B1)} \qquad\qquad\qquad\quad\  \pn \utot =\frac1\varepsilon\pt^{1/2}\utot \quad\text{ on }\Gamma.
\qquad\qquad\qquad
\end{equation*}
\LB{This is an order 1 approximation of the Dirichlet-to-Neumann operator for the equation
\[
\partial_t^2 \utot +\frac1{\varepsilon^2} \partial_t \utot -\Delta \utot  = 0
\qquad \text{in the interior domain } \mathbb{R}^3 \setminus \overline\Omega.
\]
An order 2 boundary condition derived in \cite{HJN05} in the frequency domain and translated to the time-domain setting reads
\begin{equation*}
\text{(B2)} \qquad\qquad\quad\  \pn \utot =\frac1\varepsilon\pt^{1/2}\utot-\mathcal{H}\utot \quad\text{ on }\Gamma,
\qquad\qquad\qquad
\end{equation*}
where $\mathcal{H}$ is the mean curvature of $\Gamma$ and $\Gamma$ is assumed to be smooth.}
Another example of interest are {\it acoustic boundary conditions}; see \cite{B74}. 
They are formulated as a coupled system, which reads 
\begin{align*}
\begin{split}
\text{(C)} \qquad\quad\ 
m \pt^2 \delta + \alpha \pt\delta +k \delta+ \pt \utot &=0\\
\pt \delta &=-\pn \utot
\end{split}
 \qquad\text{ on }\Gamma,
\end{align*}
taken with zero initial conditions for $\delta$ and $\pt\delta$.
Here, $m>0$, $\alpha\ge 0$ and $k>0$ are given mass, damping and stiffness parameters, respectively.

To our knowledge, no numerical analysis is so far available for acoustic scattering from any of the boundary conditions (A), (B) or (C) or other generalized impedance boundary conditions as studied, e.g., in \cite{AA96}, or for electromagnetic scattering from generalized impedance boundary conditions as studied, e.g., in \cite{AH97,DHJ06}. Numerical approaches to time-dependent acoustic scattering have been studied for Dirichlet and Neumann boundary conditions, using either {\it space-time Galerkin methods} based on the variational formulation of \cite{BH86a,BH86b} or {\it boundary element space discretization coupled with convolution quadrature time discretization} based on the theoretical framework of \cite{L94,LS09}. Here we shall adopt the latter approach and study it for the numerical treatment of acoustic scattering from generalized impedance boundary conditions such as (A)--(C).

In Section~\ref{sec:analytic} we show that the boundary conditions (A)--(C) fall into a general class of convolutional boundary conditions with a transfer operator of positive type. We present this general framework and derive the well-posedness of the scattering problem for such boundary conditions, using a system of time-dependent boundary integral equations suitable for numerical discretization (similar to  \cite{BR18}), the Kirchhoff representation formula and relying on Laplace transform techniques.

In Section~\ref{sec:cq} we study the semidiscretization in time of the time-dependent boundary integral equations and the Kirchhoff representation formula by convolution quadrature. This is shown to yield a stable and convergent approach. 

In Section~\ref{sec:bem} we study the semidiscretization in space of the time-dependent boundary integral equations by boundary elements. This is also shown to yield a stable and convergent approach. 

In Section~\ref{sec:cq-bem} we study the full discretization by boundary elements in space and convolution quadrature in time and combine the results of Sections \ref{sec:cq} and~\ref{sec:bem} to prove our main result, which shows that the method is stable and convergent of optimal order in the natural norms. This result becomes possible by the coercivity property of the Calder\'on operator for the Helmholtz equation as shown in \cite{BLS15} together with the positivity property of the transfer operator of the boundary condition, by the discrete operational calculus of convolution quadrature, and by using known approximation error bounds for convolution quadrature and boundary elements.

In Section~\ref{sec:num} we present numerical examples for scattering from the generalized impedance boundary conditions (A)--(C).
\renewcommand{\theequation}{\arabic{section}.\arabic{equation}}
\section{ Framework and analytical background}
\label{sec:analytic}
In the following, we introduce basic notation and collect analytical tools that will allow us to formulate the discretization and give its error analysis in the later chapters. The generalized impedance boundary conditions will be written as
\begin{align}\label{gibc}
\pn \utot = F(\pt)\pt \utot,
\end{align}
where $F(\pt)$ is a temporal convolution operator of positive type. \LB{Hence, recalling that $\uinc(0)$ is supported inside $\Omega$, the scattered field $\uscat = \utot-\uinc$ solves the wave equation with zero initial condition and a generalized impedance boundary condition
  \begin{equation}
     \label{eq:scat_wave}   
  \begin{aligned}
    \partial_t^2 \uscat -\Delta \uscat &= 0 & &\text{in } \Omega \times [0,T]    \\
    \uscat(0) = \partial_t \uscat(0) &= 0 & &\text{in } \Omega\\
      \pn \uscat-F(\pt)\pt\uscat &= \ginc & & \text{on } \Gamma \times [0,T],
\end{aligned}
  \end{equation}
where $\ginc = F(\pt)\pt\uinc-\pn \uinc$.
}

In Subsection~\ref{subsec:gibc} we give the precise formulation in an abstract setting and show that examples (A), (B) and (C) fit into the presented framework. In Subsection~\ref{subsec:boundary operators} we recall the potential and boundary integral operators of the Helmholtz equation. In Subsection~\ref{subsec:time-harmonic} we show the well-posedness of the {\it time-harmonic} scattering problem from generalized impedance boundary conditions, with estimates that trace carefully the dependence on complex frequencies varying in a half-plane. This relies essentially on a coercivity result for the Calder\'on operator derived in \cite{BLS15}, in combination with the properties of the generalized impedance boundary condition. Using Laplace transform techniques, we then obtain a well-posedness result for the {\it time-dependent} scattering problem in Subsection~\ref{subsec:time-dep}.

\subsection{Generalized impedance boundary conditions}
\label{subsec:gibc}

(i) {\it Hilbert space setting.} Let $X\subset H^{1/2}\left( \Gamma \right)$ be a complex Hilbert space, densely and continuously embedded in $H^{1/2}\left( \Gamma \right)$, equipped with a seminorm $\abs{w}^2_X$ and the norm 
\begin{equation}\label{X-norm}
\norm{w}^2_X=\norm{w}^2_{H^{1/2}(\Gamma)}+\abs{w}^2_X.
\end{equation}
Let $X'$ be its dual, with the anti-dual pairing $\langle \cdot,  \cdot \rangle$ of $X$ and $X'$, which is chosen to coincide with the $L^2(\Gamma)$ inner product (anti-linear in the first argument) on $X\times L^2(\Gamma)$, which is possible due to the chain of dense and continuous inclusions
\begin{align*}
X\subset H^{1/2}(\Gamma)\subset
L^2(\Gamma)= L^2(\Gamma)' \subset H^{-1/2}(\Gamma)
\subset X'.
\end{align*}

(ii) {\it Transfer operators and temporal convolution.} Let $F(s)\colon X \rightarrow X'$, for all complex $s$ with positive real part, be an analytic family of bounded linear operators.
We assume that $F$ is {\it polynomially bounded:}\/ There exists a real $\mu$ and, for every $\sigma>  0$, there exists $M_\sigma<\infty$ such that
\begin{align}\label{F-bound}
\norm{F(s)}_{X'\leftarrow X}&\leq M_{\sigma} \abs{s}^\mu \quad  \text{ for \ Re } s \ge \sigma.
\end{align}
This condition  ensures that $F$ is the Laplace transform of a distribution of finite order of differentiation with support on the nonnegative real half-line $t \ge 0$. For a function $g:[0,T]\to X$, which together with its extension by $0$ to the negative real half-axis is sufficiently regular, we use the Heaviside operational calculus notation
\begin{equation} \label{Heaviside}
F(\pt)g = (\mathcal{L}^{-1}F) * g
\end{equation}
to denote the temporal convolution of the inverse Laplace transform of $F$ with $g$. The motivation for this very useful notation comes from the facts that for $F(s)=s$, i.e. $\mathcal{L}^{-1}F$ is the derivative of Dirac's delta distribution, we have $F(\pt)g=\pt g$, and for two such families of operators $F(s)$ and $G(s)$ mapping into compatible spaces, the associativity of convolution yields the composition rule $F(\pt)G(\pt)q = (FG)(\pt)q$.

For real $r$, we let $H^r(\R,X)$ be the Sobolev space of order $r$ of $X$-valued functions on $\R$, and on finite intervals $(0, T )$ we denote
$$
H_0^r(0,T;X) = \{g|_{(0,T)} \,:\, g \in H^r(\R,X)\ \text{ with }\ g = 0 \ \text{ on }\ (-\infty,0)\} . 
$$
Equivalent to the natural norm on $H_0^r(0,T;X)$ is the norm $\| \pt^r g \|_{L^2(0,T;X)}$.
The Plancherel formula yields the following \cite[Lemma 2.1]{L94}:
If $F(s)$ is bounded by \eqref{F-bound} in a half-plane $\text{Re }s > 0$, then $F(\pt)$ extends by density to a bounded linear operator
\begin{equation}\label{sobolev-bound}
F(\pt) : H^{r+\mu}_0(0,T;X) \to H^r_0(0,T;X')
\end{equation}
for arbitrary real $r$. Note that for $r>1/2$ we have $H^r_0(0,T;X')\subset C([0,T];X')$.

(iii) {\it Positivity condition.}\/ As a key condition, we assume that for every $\sigma> \LB{\sigma_0 \geq} 0$, there exists $c_\sigma>0$ such that
\begin{equation}\label{F-pos}
\text{Re}\,\langle \psi, F(s)\psi \rangle \ge c_\sigma \, \bigl| s^{-1}\psi \bigr| _X^2
\quad \text{for all } \psi \in X \text{  and }\text{ Re }s\ge \sigma.
\end{equation}

\begin{lemma} \label{lem:gibc}
\noindent The boundary conditions (A)--(C) are special cases of generalized impedance boundary conditions \eqref{gibc} with transfer operators $F(s):X\to X'$, $\text{Re}\, s >0$, that satisfy a polynomial bound \eqref{F-bound} and the positivity condition \eqref{F-pos}.
\end{lemma}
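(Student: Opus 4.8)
The plan is to verify for each boundary condition (A), (B1), (B2), (C) that it can be written in the form \eqref{gibc} with $\pn\utot = F(\pt)\pt\utot$ for a suitable transfer operator $F(s)$, to identify the appropriate Hilbert space $X$ together with its seminorm $|\cdot|_X$, and then to check the polynomial bound \eqref{F-bound} and the positivity condition \eqref{F-pos}. The main task is bookkeeping: read off $F(s)$ by taking Laplace transforms of the right-hand sides (using that $\pt$ corresponds to multiplication by $s$ and $\pt^{1/2}$ to multiplication by $s^{1/2}$ with the principal branch on $\mathrm{Re}\,s>0$), and then exhibit the constants $\mu$, $M_\sigma$, $c_\sigma$.

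For each case I would proceed as follows. For (A), writing $\pn\utot = \eps(\pt^2 - \lb)\utot = F(\pt)\pt\utot$ identifies $F(s) = \eps(s - s^{-1}\lb)$, so that $\langle\psi,F(s)\psi\rangle = \eps\,s\|\psi\|_{L^2(\Gamma)}^2 + \eps\,s^{-1}\|\sg\psi\|_{L^2(\Gamma)}^2$ after integration by parts of the Laplace--Beltrami term. The natural choice is $X = H^1(\Gamma)$ with $|w|_X^2 = \|\sg w\|_{L^2(\Gamma)}^2$; taking real parts and using $\mathrm{Re}\,s\ge\sigma>0$ yields \eqref{F-pos}, while the polynomial bound \eqref{F-bound} holds with $\mu=1$. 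For (B1), $F(s) = \frac1\eps s^{-1/2}$, so $\mathrm{Re}\,\langle\psi,F(s)\psi\rangle = \frac1\eps\,\mathrm{Re}(s^{-1/2})\|\psi\|_{L^2(\Gamma)}^2$; here the key elementary inequality is that $\mathrm{Re}(s^{-1/2})\ge c\,|s^{-1}|^{1/2}\ge c\,|s|^{-1}\cdot|s|^{1/2}$ on $\mathrm{Re}\,s\ge\sigma$, which after matching the $|s^{-1}\psi|_X^2$ form on the right suggests taking $X = L^2(\Gamma)$ (so $\|\cdot\|_{H^{1/2}(\Gamma)}$ controls the seminorm) with $|w|_X = \|w\|_{H^{1/4}(\Gamma)}$ or similar, chosen to absorb the fractional power; the polynomial bound holds with $\mu = -\tfrac12$. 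Case (B2) adds the bounded multiplication operator $-\mathcal H$ (mean curvature), i.e. $F(s) = \frac1\eps s^{-1/2} - s^{-1}\mathcal H$, and one checks that this lower-order, $s^{-1}$-decaying perturbation does not destroy positivity for $\sigma$ large enough, which is exactly why the threshold $\sigma_0$ in \eqref{F-pos} was introduced.

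Case (C) requires eliminating the auxiliary variable $\delta$. From the coupled system I would Laplace-transform both equations: the second gives $s\widehat\delta = -\pn\widehat\utot$, and substituting into the first, $(ms^2+\alpha s+k)\widehat\delta + s\widehat\utot = 0$, yields $\pn\widehat\utot = \dfrac{s^2}{ms^2+\alpha s+k}\,\widehat\utot$, hence $F(s) = \dfrac{s}{ms^2+\alpha s+k}$ acting as scalar multiplication on $X = L^2(\Gamma)$. The positivity computation amounts to showing $\mathrm{Re}\,\dfrac{s}{ms^2+\alpha s+k}\ge c_\sigma\,|s|^{-2}$ for $\mathrm{Re}\,s\ge\sigma$, which follows from expanding the real part of $s\,\overline{(ms^2+\alpha s+k)}/|ms^2+\alpha s+k|^2$ and using $m,k>0$, $\alpha\ge0$; the polynomial bound holds trivially with $\mu=-1$ since $F(s)\to 0$.

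The step I expect to be the main obstacle is the positivity condition \eqref{F-pos} for the fractional and rational transfer operators, because there one must simultaneously (i) choose the seminorm $|\cdot|_X$ so that the right-hand side $|s^{-1}\psi|_X^2$ has the correct scaling in $|s|$ to be dominated by $\mathrm{Re}\,\langle\psi,F(s)\psi\rangle$, and (ii) secure a lower bound on the real part that is uniform for $s$ in the full half-plane $\mathrm{Re}\,s\ge\sigma$ rather than merely on the real axis. For (B1)/(B2) this hinges on a careful estimate of $\mathrm{Re}(s^{-1/2})$ away from the imaginary axis, and for (C) on keeping the denominator $|ms^2+\alpha s+k|$ under control; in both the lower-order perturbation in (B2) and the damping term in (C) force the use of a strictly positive threshold $\sigma_0$, explaining its appearance in the hypothesis.
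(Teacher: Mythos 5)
Your overall plan coincides with the paper's proof: identify $F(s)$ for each boundary condition, choose $X$ and its seminorm, and verify \eqref{F-bound} and \eqref{F-pos} directly. Cases (A) and (C) are essentially correct. For (A) the paper takes the $\eps$-weighted seminorm $\abs{\psi}_X = \eps^{1/2}\norm{\sg\psi}_{L^2(\Gamma)^3}$, which makes the constants uniform in $\eps$; your unweighted choice still satisfies the lemma as stated, only with $c_\sigma \propto \eps$. For (C) your $F(s)=s/(ms^2+\alpha s+k)$ is exactly the paper's $(ms+\alpha+ks^{-1})^{-1}$, and your quantitative bound $\Re F(s)\ge c_\sigma\abs{s}^{-2}$ is correct (the relevant numerator is $(m\abs{s}^2+k)\Re s+\alpha\abs{s}^2>0$) and even stronger than needed: the paper takes $\abs{\cdot}_X\equiv 0$ there, so mere non-negativity of $\Re F(s)$ suffices. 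One misstatement: no positive threshold $\sigma_0$ is needed for (C), contrary to your closing remark; the damping $\alpha\ge 0$ only adds a non-negative contribution, and the paper has $\sigma_0=0$ for (A), (B1) and (C), with only (B2) requiring $\sigma_0>0$.

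The genuine gap is in case (B). There $F(s)$ acts as a scalar multiple of the identity (plus a bounded multiplication operator in (B2)), so the left-hand side of \eqref{F-pos} is $\eps^{-1}\Re(s^{-1/2})\norm{\psi}_{L^2(\Gamma)}^2$ up to the curvature term, i.e.\ it controls only the $L^2(\Gamma)$ norm of $\psi$. Consequently the seminorm must be dominated by the $L^2(\Gamma)$ norm: with your proposal $\abs{\psi}_X=\norm{\psi}_{H^{1/4}(\Gamma)}$, condition \eqref{F-pos} at a single fixed $s$ would force $\norm{\psi}_{H^{1/4}(\Gamma)}\le C\norm{\psi}_{L^2(\Gamma)}$ for all $\psi$, which is false (take oscillatory $\psi$). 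No fractional-order Sobolev seminorm can ``absorb the fractional power of $s$''; that power must be absorbed into the constant $c_\sigma$, not into extra smoothness of $\psi$. The paper's choice is $X=H^{1/2}(\Gamma)$ as a set (note the framework requires $X\subset H^{1/2}(\Gamma)$ densely and continuously embedded, so your $X=L^2(\Gamma)$ is not admissible either), equipped with the weighted seminorm $\abs{\psi}_X=\eps^{-1/2}\norm{\psi}_{L^2(\Gamma)}$; the zero seminorm would also satisfy the lemma, but the weighted one is what the later error analysis uses. With that corrected choice your own inequality $\Re(s^{-1/2})\ge\tfrac{1}{\sqrt{2}}\abs{s}^{-1/2}$ does close the argument for (B1), since $\abs{s}^{-1/2}\ge\sigma^{3/2}\abs{s}^{-2}$ for $\abs{s}\ge\sigma$; and for (B2) your perturbation idea is made explicit in the paper with the threshold $\sigma_0=\max(0,4\eps^2\mathcal{H}_{\mathrm{max}}^2)$, which guarantees $\tfrac12\,\eps^{-1}\Re(s^{1/2})\ge \mathcal{H}_{\mathrm{max}}\ge \mathcal{H}_{\mathrm{max}}\,\Re s/\abs{s}$, so the curvature term eats at most half of the leading term.
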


\begin{remark} \label{rem:gibc} For (A) we choose $X=H^1(\Gamma)$ equipped with the seminorm $\abs{\psi}_X=\eps^{1/2}\norm{\nabla_\Gamma \psi}_{L^2(\Gamma)^3}$ and the corresponding norm \eqref{X-norm}. This yields \eqref{F-bound} and \eqref{F-pos} with constants that are independent of $\eps\in(0,1]$.
For (B) and (C) we have simply $X=H^{1/2}(\Gamma)$ and the seminorm $|\cdot|_X$ is identically zero. \LB{In \eqref{F-pos}, we set $\sigma_0 =  \min(0,2\eps\mathcal{H}_{\mathrm{max}})^2$ with $\mathcal{H}_{\mathrm{max}}=\max_\Gamma \mathcal{H}$ for (B2). For (A), (B1) and (C), we can set $\sigma_0 = 0$.  In all cases, the constant $M_\sigma$, and for (A) the constant $c_{\sigma}^{-1}$, grow at most polynomially as $\sigma \rightarrow 0$.}
\end{remark}

\begin{proof}
(A) \  We begin by rearranging the boundary condition in question to obtain
\begin{align*}
\pn u&=\eps(\pt^2u -\lb u)
\\&=\eps(\pt-\lb \pt^{-1})\pt u.
\end{align*}
The corresponding transfer operator  is given by 
\begin{align*}
F(s)=\eps\bigl(s-\lb s^{-1}\bigr).
\end{align*}
For this operator, we work with the space
$X=H^1(\Gamma)$,
where we choose the semi-norm $\abs{\cdot}_X$ as
\begin{align*}
\abs{\psi}_X^2=\eps\norm{\nabla_\Gamma \psi}_{L^2(\Gamma)^3}^2
\end{align*}
and the norm on $X$ as \eqref{X-norm}.
Then, $F$ satisfies the polynomial bound \eqref{F-bound} with $\mu=1$ and a constant that is independent of~$\eps$ for $0<\eps\le 1$
, since
$$
\| F(s) \|_{X' \leftarrow X} =  \sup_{\psi_1,\psi_2\in X \atop \| \psi_1 \|_X = \| \psi_2 \|_X =1} \abs{\langle \psi_2, F(s) \psi_1 \rangle}
$$
and, for $\text{Re}\, s \ge \sigma>0$,
\begin{align*}
 \abs{\langle \psi_2, \eps\bigl(s-\lb s^{-1}\bigr) \psi_1 \rangle }
 &\le \abs{ \eps \, |s|\, ( \psi_2, \psi_1)_{L^2(\Gamma)} + \eps \, |s|^{-1}\, ( \nabla_\Gamma\psi_2, \nabla_\Gamma\psi_1)_{L^2(\Gamma)^3} }
 \\
 & \le 
\eps \, |s|\, \| \psi_2\|_{L^2(\Gamma)} \, \| \psi_1\|_{L^2(\Gamma)} +
 \eps \, |s|^{-1}\, \| \nabla_\Gamma \psi_2\|_{L^2(\Gamma)^3} \, \| \nabla_\Gamma \psi_1\|_{L^2(\Gamma)^3}
 \\
 & \le |s|\, (\eps + \sigma^{-2}) \, \| \psi_2\|_{X} \, \| \psi_1\|_{X}.
\end{align*}
$F$ satisfies the positivity condition \eqref{F-pos}, because for $\psi\in X$ and $\text{Re}\, s \ge \sigma>0$,
\begin{align*}
\text{Re}\, \langle \psi,  \eps\bigl(s-\lb s^{-1}\bigr) \psi \rangle 
& =  \eps \,\text{Re}\,s\, \| \psi \|_{L^2(\Gamma)}^2 + \eps  \, \text{Re}\,\overline s\,  \| s^{-1} \nabla_\Gamma \psi \|_{L^2(\Gamma)^3}^2 
\\
& \ge \sigma \eps \, \| s^{-1} \nabla_\Gamma \psi \|_{L^2(\Gamma)^3}^2 
\\
&= \sigma \, \bigl| s^{-1} \psi \bigr|_X^2.
\end{align*}

\noindent (B) \ \ 
\LB{Here the transfer operator is either $F(s) = \varepsilon^{-1}s^{-1/2}$ or $F(s) = \varepsilon^{-1}s^{-1/2}-\mathcal{H}s^{-1}$}.  \JN{We choose $X=L^2(\Gamma)$ and the corresponding weighted norm as the seminorm 
\begin{align*}
\abs{\psi}_X = \varepsilon^{-1/2}\norm{\psi}_{L^2(\Gamma)}.
\end{align*}
Like in (A) this yields, for arbitrary $\psi_1,\psi_2$ with $ \| \psi_1 \|_X = \| \psi_2 \|_X =1$, the following estimate
\begin{align*}
 \abs{\left \langle \psi_2, \left(\varepsilon^{-1}s^{-1/2}+\mathcal{H}s^{-1}\right) \psi_1 \right\rangle }
 &\le
 \left(\varepsilon^{-1}\abs{s}^{-1/2}+\norm{\mathcal{H}}_{L^\infty(\Gamma)}\abs{s}^{-1}\right)
 \norm{\psi_2}_{L^2(\Gamma)}\norm{ \psi_1 }_{L^2(\Gamma)} 
 \\&\le 
 \left(\abs{s}^{-1/2}+\varepsilon\norm{\mathcal{H}}_{L^\infty(\Gamma)}\abs{s}^{-1}\right)
 \le \sigma^{-1}M,
\end{align*}
where the last estimate holds true for  $\sigma<1$. 
Therefore, both (B1) and (B2) satisfy
 \eqref{F-bound} with $\mu = 0$, $\sigma_0=0$ and constants $M_\sigma$ independent of $\varepsilon$. 
 }
 \bcl For (B1), \eqref{F-pos} holds with $\sigma_0 = 0$ since  $\text{Re}\, s^{-1/2}>0$ for $\text{Re}\, s >0$. For (B2) we have, for  $\Re s>  0$,
 \JN{
\[
  \begin{split}    
\Re \langle \psi, F(s) \psi \rangle  
&
\geq \left(\varepsilon^{-1} \Re s^{-1/2}  - \mathcal{H}_{\mathrm{max}} \,\Re s^{-1}\right)
\|\psi\|^2_{L^2(\Gamma)}\\
&= |s|\left(\varepsilon^{-1} \Re s^{1/2}- \mathcal{H}_{\mathrm{max}} \,\frac{\Re s}{|s|}\right)\|s^{-1}\psi\|_{L^2(\Gamma)}^2
\geq 
\dfrac{\sigma^{3/2}}{2}
\abs{s^{-1}\psi}_{X}^2
  \end{split}
\]
}
\JN{for $\Re s^{1/2} \ge2\eps \mathcal{H}_{\mathrm{max}}$,  which holds true for
 $\Re s \geq \sigma_0 =  \max\left(0,4\varepsilon^2\mathcal{H}_{\mathrm{max}}^2\right)$.}\ecl

\ \\
(C) \ \ For the acoustic boundary condition, substituting the second equation into the first equation yields the equivalent formulation
\begin{align*}
(m \pt  + \alpha+k\pt^{-1}) \pn u- \pt u &=0,
\end{align*}
and by applying the inverse we obtain the formulation  \eqref{gibc} with
\begin{align*}
 F(s) = (m s  + \alpha+ks^{-1})^{-1}.
\end{align*}
We again choose $X=H^{1/2}(\Gamma)$ with the seminorm 
$\abs{\cdot}_X\equiv 0$.
We clearly have the bound \eqref{F-bound} with $\mu=-1$ and we also have the positivity property \eqref{F-pos},  because
for $\text{Re}\, s >0$,
\begin{align*}
\text{Re } F(s)=
\text{Re } (m s  + \alpha+ks^{-1})^{-1}
&=\text{Re }\dfrac{(m\overline{s}+\alpha+k \overline{s}^{-1})}{\abs{m\overline{s}+\alpha+k \overline{s}^{-1}}^2}
\ge 0.
\end{align*}
This non-negativity yields \eqref{F-pos} for $\abs{\cdot}_X\equiv 0$.
\end{proof}

\subsection{Recap: Kirchhoff representation formula and Calder\'on operator for the Helmholtz equation}
\label{subsec:boundary operators}
With  the Laplace transformed wave equation, i.e. the Helmholtz equation
\begin{equation}\label{helmholtz}
s^2 \widehat{u}-\Delta \widehat{u} =0,\qquad \Re s>0,
\end{equation}
we associate the usual boundary integral operators in the notation used, e.g., by \cite{LS09,S16,BLS15}.
The single layer potential operator is denoted by
\begin{align*}
	S(s)\varphi(x)&=\int_{\Gamma}\dfrac{1}{4\pi\left| x-y \right|}e^{-s\left|x-y\right|}\varphi(y) d\Gamma_y \quad \quad x\in  \Omega,
\end{align*}
and the double layer potential is denoted by
\begin{align*}
	D(s)\psi(x)&=\int_{\Gamma}\left(\partial_{n_y}\dfrac{1}{4\pi\left| x-y \right|}e^{-s\left|x-y\right|}\right)\psi(y) d\Gamma_y \quad \quad x\in  \Omega.
\end{align*}
These integral operators are bounded linear operators on the spaces
$$
S(s): H^{-1/2}(\Gamma) \to H^1(\Omega), \quad\
D(s): H^{1/2}(\Gamma) \to H^1(\Omega).
$$
They are bounded, for $\text{Re} \ s \ge \sigma >0$, by $C_\sigma\abs{s}^{\mu}$ with $\mu=1$ and $\mu=3/2$, respectively; see \cite{BH86a,BH86b,LS09}.

Every solution $\wu\in H^1(\Omega)$ of the Helmholtz equation \eqref{helmholtz} can be written in terms of the single and double layer potentials of the boundary values by the representation formula
\begin{equation}\label{kirchhoff}
\wu=S(s)\wvarphi+D(s)s^{-1}\wpsi,
\end{equation}
where $\wvarphi$ and $\wpsi$ are the (scaled) Neumann and Dirichlet data:
\begin{align*}
\wvarphi=-\pn \wu,\quad \quad \wpsi =s\gamma \wu,
\end{align*}
with $\gamma$ denoting the trace operator onto $\Gamma$.

The related integral operators on the boundary are denoted by
\begin{alignat}{2}
	V(s)\varphi(x)&=\int_{\Gamma}^{}\dfrac{1}{4\pi\left| x-y \right|}e^{-s\left|x-y\right|}\varphi(y) d\Gamma_y \quad \quad &&x\in \Gamma,\\
	K(s)\psi(x)&=\int_{\Gamma}^{}\left(\pnx\dfrac{1}{4\pi\left| x-y \right|}e^{-s\left|x-y\right|}\right)\psi(y) d\Gamma_y \quad \quad &&x\in \Gamma,\\
	K^T(s)\varphi(x)&=\pnx\int_{\Gamma}^{}\dfrac{1}{4\pi\left| x-y \right|}e^{-s\left|x-y\right|}\varphi(y) d\Gamma_y \quad \quad &&x\in \Gamma,\\
	W(s)\psi(x)&=-\pnx\int_{\Gamma}^{}\left(\pnx\dfrac{1}{4\pi\left| x-y \right|}e^{-s\left|x-y\right|}\right)\psi(y) d\Gamma_y \quad \quad &&x\in \Gamma.
\end{alignat}
They are bounded linear operators on the following spaces,
\begin{alignat*}{2}
	V(s) &\colon H^{-1/2}(\Gamma)\rightarrow H^{1/2}(\Gamma),\quad
	K(s) &&\colon H^{1/2}(\Gamma)\rightarrow H^{1/2}(\Gamma),\\
	K^T(s) &\colon H^{-1/2}(\Gamma)\rightarrow H^{-1/2}(\Gamma),
\ \	W(s) &&\colon H^{1/2}(\Gamma)\rightarrow H^{-1/2}(\Gamma).
\end{alignat*}
They are bounded, for $\text{Re} \ s \ge \sigma >0$, by $C_\sigma\abs{s}^{\kappa}$ with $\kappa=1,\,3/2,\,3/2,\,2$, respectively \LB{and a constant $C_\sigma$ growing at most polynomially as $\sigma \rightarrow 0$}; see
\cite{BH86a,BH86b}.

With the boundary operators we form the {\it Calder\'on operator}
\begin{equation}\label{calderon}
B(s)  =\begin{pmatrix}
s V(s) & K(s) \\
-K^T(s) & s^{-1}W(s)
\end{pmatrix}
\ : \ H^{-1/2}(\Gamma) \times H^{1/2}(\Gamma) \to H^{1/2}(\Gamma) \times H^{-1/2}(\Gamma).
\end{equation}
 From the mapping properties of the individual boundary integral operators we conclude that $B(s)$ is bounded as
 \begin{equation}\label{B-bound}
 \|B(s)\|_{H^{1/2}(\Gamma) \times H^{-1/2}(\Gamma) \leftarrow H^{-1/2}(\Gamma) \times H^{1/2}(\Gamma)} \leq C_\sigma |s|^2
 \quad\ \text{ for \ $\Re s \geq \sigma > 0$,}
 \end{equation}
\LB{with $C_\sigma$ growing polynomially in $\sigma^{-1}$ as $\sigma \rightarrow 0$.}
  In the following, we denote the anti-duality  between $H^{-1/2}(\Gamma) \times H^{1/2}(\Gamma)$ and $H^{1/2}(\Gamma) \times H^{-1/2}(\Gamma)$ by $\langle \cdot, \cdot \rangle_\Gamma$. We have the following important coercivity property.

\begin{lemma}\label{lemma:B-pos}
\cite[Lemma 3.1]{BLS15}
There exists $\beta > 0$ so that the Calder\'on operator \eqref{calderon} satisfies
\[
\Re \left \langle
  \begin{pmatrix}
      \varphi \\ \psi
  \end{pmatrix}, B(s) 
  \begin{pmatrix}
      \varphi \\ \psi
  \end{pmatrix}
\right \rangle_\Gamma
 \geq \beta\, \min(1,|s|^2) \, \Re s \, \left(\|s^{-1}\varphi\|^2_{H^{-1/2}(\Gamma)} + \|s^{-1}\psi\|^2_{H^{1/2}(\Gamma)}\right)
\]
for  $\Re s > 0$ and for all $\varphi \in H^{-1/2}(\Gamma)$ and $\psi \in H^{1/2}(\Gamma)$.
\end{lemma}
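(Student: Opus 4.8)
The plan is to reduce this boundary coercivity estimate to a volume energy estimate for the associated Helmholtz potential, following the strategy of \cite{BLS15}. Given a density pair $(\varphi,\psi)\in H^{-1/2}(\Gamma)\times H^{1/2}(\Gamma)$, I would introduce the function
\[
u = S(s)\varphi + D(s)\,s^{-1}\psi \qquad\text{on } \R^3\setminus\Gamma,
\]
which solves the Helmholtz equation \eqref{helmholtz} on both the bounded interior component and the exterior component and decays at infinity since $\Re s>0$. The key observation is that the given densities are exactly the jumps of the Cauchy data of $u$ across $\Gamma$. Writing $[\cdot]$ for the jump and $\{\cdot\}$ for the average of a trace across $\Gamma$, the jump relations of the single and double layer potentials give $[\gamma u]=s^{-1}\psi$ and $[\pn u]=-\varphi$, while the averages reproduce the four boundary integral operators, $\{\gamma u\}=V(s)\varphi+K(s)s^{-1}\psi$ and $\{\pn u\}=K^T(s)\varphi-W(s)s^{-1}\psi$.

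First I would establish the energy identity. Applying Green's first formula separately on the interior and exterior domains (the boundary term at infinity vanishing because $\Re s>0$), using $\Delta u=s^2u$, and adding the two contributions, the boundary terms combine through the algebraic identity $a^-\overline{b^-}-a^+\overline{b^+}=-\{a\}\overline{[b]}-[a]\overline{\{b\}}$ into a pairing of the averages with the jumps computed above. Substituting these expressions yields
\[
\int_{\R^3\setminus\Gamma}\!\bigl(|\nabla u|^2+s^2|u|^2\bigr)dx
=\langle V(s)\varphi,\varphi\rangle+\langle K(s)s^{-1}\psi,\varphi\rangle-\langle s^{-1}\psi,K^T(s)\varphi\rangle+\langle s^{-1}\psi,W(s)s^{-1}\psi\rangle .
\]
Multiplying by $\overline s$ and taking real parts, the scaling factors rearrange into precisely the entries $sV(s)$ and $s^{-1}W(s)$ of $B(s)$; here I would use that $\langle V(s)\varphi,\varphi\rangle=\overline{\langle\varphi,V(s)\varphi\rangle}$ and that the two off-diagonal contributions are complex conjugates of the corresponding terms in $B(s)$, so that after taking real parts one obtains the clean identity
\[
\Re\left\langle\begin{pmatrix}\varphi\\\psi\end{pmatrix},B(s)\begin{pmatrix}\varphi\\\psi\end{pmatrix}\right\rangle_\Gamma
=\Re s\int_{\R^3\setminus\Gamma}\!\bigl(|\nabla u|^2+|s|^2|u|^2\bigr)dx .
\]
In particular the left-hand side is nonnegative and equals $\Re s$ times the $|s|$-weighted $H^1$ energy of $u$.

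It then remains to bound this energy from below by the density norms. Since $s^{-1}\psi=[\gamma u]$ and $\varphi=-[\pn u]$, the asserted estimate is equivalent to
\[
\int_{\R^3\setminus\Gamma}\!\bigl(|\nabla u|^2+|s|^2|u|^2\bigr)dx\;\gtrsim\;\min(1,|s|^2)\Bigl(|s|^{-2}\norm{[\pn u]}_{H^{-1/2}(\Gamma)}^2+\norm{[\gamma u]}_{H^{1/2}(\Gamma)}^2\Bigr).
\]
Here I would invoke $s$-explicit trace and lifting estimates. Introducing the frequency-weighted norm $\norm{v}_{1/2,|s|}^2=\int(|\xi|^2+|s|^2)^{1/2}|\widehat v|^2$ and its dual $\norm{\cdot}_{-1/2,|s|}$ (reduced to $\Gamma$ by a partition of unity and the half-space model), the scaled trace theorem gives $\norm{[\gamma u]}_{1/2,|s|}\lesssim\norm{u}_{1,|s|}$ and, via the bounded right inverse of the trace together with Green's formula, $\norm{[\pn u]}_{-1/2,|s|}\lesssim\norm{u}_{1,|s|}$, where $\norm{u}_{1,|s|}^2=\int(|\nabla u|^2+|s|^2|u|^2)$. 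The last step compares these weighted norms with the standard $H^{\pm1/2}(\Gamma)$ norms, and this is what produces the factor $\min(1,|s|^2)$: for $|s|\ge1$ one has $\norm{\cdot}_{H^{1/2}}\le\norm{\cdot}_{1/2,|s|}$ and $\norm{\cdot}_{H^{-1/2}}^2\le|s|\,\norm{\cdot}_{-1/2,|s|}^2$, whereas for $|s|\le1$ one has $\norm{\cdot}_{H^{1/2}}\le|s|^{-1}\norm{\cdot}_{1/2,|s|}$ and $\norm{\cdot}_{H^{-1/2}}\le\norm{\cdot}_{-1/2,|s|}$; in each regime the powers of $|s|$ combine to exactly the claimed bound.

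I expect the main obstacle to be this final step: deriving the trace and inverse-trace estimates with sharp, explicit dependence on $s$, and then matching the frequency-weighted norms to $H^{\pm1/2}(\Gamma)$ uniformly across both regimes $|s|\ge1$ and $|s|\le1$, which is precisely what forces the nonsmooth weight $\min(1,|s|^2)$. A secondary technical point is the careful bookkeeping of conjugations in the energy identity, so that after multiplying by $\overline s$ and taking the real part the complex-symmetric (rather than Hermitian) operators $V(s)$ and $W(s)$ assemble into the $s$-scaled Calder\'on operator $B(s)$ and not into $B(\overline s)$.
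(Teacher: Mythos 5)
Your proposal is correct and follows essentially the same argument as the paper's source for this result: the paper itself gives no proof but quotes it verbatim from \cite{BLS15} (Lemma 3.1), whose proof is exactly your strategy — the transmission solution $u=S(s)\varphi+D(s)s^{-1}\psi$, the jump relations, the Green's-formula energy identity $\Re\bigl\langle\cdot,B(s)\cdot\bigr\rangle_\Gamma=\Re s\int_{\R^3\setminus\Gamma}\bigl(|\nabla u|^2+|s|^2|u|^2\bigr)\,dx$, and $s$-explicit trace and normal-trace estimates producing the factor $\min(1,|s|^2)$. The only (cosmetic) difference is in the last step, where \cite{BLS15} compares the $|s|$-weighted volume norm with the unweighted $H^1$ norm via the standard trace theorem and lifting, while you route through Bamberger--Ha-Duong-style frequency-weighted boundary norms; both give the same powers of $|s|$ in the two regimes $|s|\le 1$ and $|s|\ge 1$.
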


\subsection{Time-harmonic scattering from generalized impedance boundary conditions}
\label{subsec:time-harmonic}
\LB{We start with the time-harmonic formulation of the scattering problem \eqref{eq:scat_wave}
\[
s^2 \wuscat -\Delta \wuscat = 0\quad \quad\quad \quad\quad \quad \text{ in } \Omega.
\]
with the boundary condition
 \begin{align}\label{Generalbc_s}
 \pn \wuscat-F(s)s\wuscat = \wginc \quad\quad \quad \text{on } \Gamma
 \end{align}
where $\wginc = F(s)s\wuinc-\pn\wuinc.$}
Following \cite{BR18} (see also \cite{BL19}), we start from the observation that every solution $\wuscat\in H^1(\Omega)$ 
satisfies the identity
\begin{align}\label{Bimpsys}
\Bimp (s)
\begin{pmatrix}
-\pn \wuscat\\
 s\gamma\wuscat
\end{pmatrix}
=
\begin{pmatrix}
0\\
-\pn \wuscat
\end{pmatrix}, \qquad
\text{where}\quad \Bimp(s)=B(s) + \begin{pmatrix}
0 &-\tfrac{1}{2}I \\
\tfrac{1}{2}I & 0
\end{pmatrix}.
\end{align}
This is a consequence of Kirchhoff's representation theorem and the jump conditions of the potential operators. 
Inserting the boundary condition \eqref{Generalbc_s} into the right-hand side of (\ref{Bimpsys}) gives 
\begin{align*}
\Bimp(s)
\begin{pmatrix}
\wvarphi\\
\wpsi
\end{pmatrix}
=
\begin{pmatrix}
0\\
-F(s)\wpsi-\wginc
\end{pmatrix},
\end{align*}
where 
\begin{align*}
\wvarphi=-\pn \wuscat,\quad \quad \wpsi =s\gamma \wuscat.
\end{align*}
We rearrange in a way that all terms containing $\wpsi$ only appear on the left-hand side to arrive at the equation
\begin{align}\label{Asoperator}
A(s)\begin{pmatrix}
\wvarphi\\
\wpsi
\end{pmatrix}
=
\begin{pmatrix}
0\\
-\wginc
\end{pmatrix},
\quad\ \text{ where}\quad A(s) = \Bimp(s) 
+\begin{pmatrix}
0 &0 \\
0 & F(s)
\end{pmatrix}.
\end{align}
%
%
%
%

 The operator $A(s)$ inherits important properties of the Calder\'on operator $B(s)$. The following two lemmas collect bounds and coercivity results of $A(s)$ for  $F(s)$ polynomially bounded and of positive type.
\begin{lemma}[Boundedness]\label{lmbound}
If $F(s):X\rightarrow X'$ satisfies the bound \eqref{F-bound}  with $\mu\le 2$, then the corresponding operator $A(s)$, defined in \eqref{Asoperator}, is a bounded linear operator
$$
A(s)
\colon H^{-1/2}(\Gamma)\times X \rightarrow
H^{1/2}(\Gamma)\times X', \qquad  \Re s >0,
$$
which is bounded by $C_\sigma |s|^2$ for $\Re s \ge \sigma >0$.
\end{lemma}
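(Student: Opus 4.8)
The plan is to prove the boundedness of $A(s) \colon H^{-1/2}(\Gamma)\times X \to H^{1/2}(\Gamma)\times X'$ by treating $A(s)$ as a sum of three pieces, using the decomposition
\[
A(s) = B(s) + \begin{pmatrix} 0 & -\tfrac{1}{2}I \\ \tfrac{1}{2}I & 0 \end{pmatrix} + \begin{pmatrix} 0 & 0 \\ 0 & F(s) \end{pmatrix},
\]
which follows immediately by combining the definition of $A(s)$ in \eqref{Asoperator} with the definition of $\Bimp(s)$ in \eqref{Bimpsys}. I would bound each of the three summands separately as an operator from $H^{-1/2}(\Gamma)\times X$ into $H^{1/2}(\Gamma)\times X'$, and then add the bounds.

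First I would handle the Calder\'on operator $B(s)$. From \eqref{B-bound} we already have $\|B(s)\|_{H^{1/2}(\Gamma) \times H^{-1/2}(\Gamma) \leftarrow H^{-1/2}(\Gamma) \times H^{1/2}(\Gamma)} \leq C_\sigma |s|^2$ for $\Re s \ge \sigma > 0$. To transfer this to the spaces involving $X$, I would use that $X$ is continuously embedded in $H^{1/2}(\Gamma)$ and, dually, that $H^{-1/2}(\Gamma)$ is continuously embedded in $X'$. Thus the input map $H^{-1/2}(\Gamma)\times X \hookrightarrow H^{-1/2}(\Gamma)\times H^{1/2}(\Gamma)$ and the output map $H^{1/2}(\Gamma)\times H^{-1/2}(\Gamma)\hookrightarrow H^{1/2}(\Gamma)\times X'$ are both bounded with constants independent of $s$, so composing with $B(s)$ preserves the $C_\sigma|s|^2$ bound up to a fixed multiplicative constant coming from the embedding norms.

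Next, the constant off-diagonal block $\begin{pmatrix} 0 & -\tfrac{1}{2}I \\ \tfrac{1}{2}I & 0 \end{pmatrix}$ is bounded independently of $s$: the map $\psi \mapsto -\tfrac12\psi$ sends $X \hookrightarrow H^{1/2}(\Gamma)$ (the first output component) and $\varphi \mapsto \tfrac12\varphi$ sends $H^{-1/2}(\Gamma)\hookrightarrow X'$ (the second output component), both bounded by the embedding constants. This contributes only an $|s|$-independent term, which is dominated by $C_\sigma|s|^2$ for $|s|$ bounded below; for the relevant regime $\Re s \ge \sigma$ one simply absorbs it into the constant. Finally, the block $\begin{pmatrix} 0 & 0 \\ 0 & F(s) \end{pmatrix}$ maps $X$ into $X'$ in the second component with norm $\|F(s)\|_{X'\leftarrow X} \le M_\sigma |s|^\mu$ by the polynomial bound \eqref{F-bound}. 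Since we assume $\mu \le 2$, we have $|s|^\mu \le |s|^2$ whenever $|s|\ge 1$, and for $|s|$ in a bounded annulus $\sigma \le |s| \le 1$ the factor $|s|^\mu$ is again dominated by a constant times $|s|^2$; in either case this term is bounded by $C_\sigma|s|^2$.

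Summing the three contributions gives $\|A(s)\|_{H^{1/2}(\Gamma)\times X' \leftarrow H^{-1/2}(\Gamma)\times X} \le C_\sigma|s|^2$ for $\Re s \ge \sigma > 0$, as claimed. I do not anticipate any serious obstacle here, since the argument is a routine assembly of known bounds together with the continuity of the embeddings $X \subset H^{1/2}(\Gamma)$ and $H^{-1/2}(\Gamma)\subset X'$; the only point requiring mild care is the bookkeeping of the exponents to confirm that the hypothesis $\mu \le 2$ is exactly what is needed so that the $F(s)$-block does not dominate the $|s|^2$ growth of the Calder\'on operator.
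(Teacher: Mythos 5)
Your proof is correct and takes essentially the same route as the paper, whose entire argument is the one-line remark that the lemma ``follows readily from the bounds of $B(s)$ and $F(s)$, noting that $X\subset H^{1/2}(\Gamma)$.'' Your expanded version---splitting $A(s)$ into the Calder\'on block, the constant skew block, and the $F(s)$ block, transferring the bound \eqref{B-bound} via the embeddings $X\hookrightarrow H^{1/2}(\Gamma)$ and $H^{-1/2}(\Gamma)\hookrightarrow X'$, and checking that $|s|^\mu\le C_\sigma|s|^2$ for $\mu\le 2$ and $|s|\ge\sigma$---is precisely the bookkeeping the paper leaves implicit.
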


This follows readily from the bounds of $B(s)$ and $F(s)$, noting that $X\subset H^{1/2}(\Gamma)$.
$A(s)$ also inherits a coercivity property from $B(s)$, which is formulated in the following Lemma.
\begin{lemma}[Coercivity]\label{lmcoerc}
Let $F(s):X\rightarrow X'$ satisfy the positivity condition \eqref{F-pos}. Then, for every $\sigma>\LB{\sigma_0 \geq 0}$ there exists $\alpha_\sigma>0$ such that for  $\Re s\ge\sigma$,
\emph{\begin{align*}
\Re \left\langle
\begin{pmatrix}
\wvarphi\\
\wpsi
\end{pmatrix},
A(s)
\begin{pmatrix}
\wvarphi\\
\wpsi
\end{pmatrix}
\right\rangle
\ge  \alpha_\sigma \left(\norm{
s^{-1}\wvarphi}^2_{H^{-1/2}(\Gamma)}
+\norm{
s^{-1}\wpsi}^2_{X} \quad\right),
\end{align*}}
where the anti-duality on the left-hand side is that between  $H^{-1/2}(\Gamma)\times X$ and $H^{1/2}(\Gamma)\times X'$.
\end{lemma}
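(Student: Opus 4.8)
The plan is to exploit the additive structure of $A(s)$ recorded in \eqref{Asoperator}, namely
\[
A(s) = B(s) + \begin{pmatrix} 0 & -\tfrac12 I \\ \tfrac12 I & 0 \end{pmatrix} + \begin{pmatrix} 0 & 0 \\ 0 & F(s) \end{pmatrix},
\]
and to take the real part of the quadratic form term by term. The decisive observation, which I would establish first, is that the skew off-diagonal block contributes nothing to the real part. Indeed, applying that block to $(\wvarphi,\wpsi)$ gives $(-\tfrac12\wpsi,\tfrac12\wvarphi)$, so the associated pairing equals $-\tfrac12\langle\wvarphi,\wpsi\rangle + \tfrac12\langle\wpsi,\wvarphi\rangle$. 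Since all duality pairings are chosen to coincide with the $L^2(\Gamma)$ inner product, which is anti-linear in its first argument, we have $\langle\wpsi,\wvarphi\rangle = \overline{\langle\wvarphi,\wpsi\rangle}$; hence this expression is purely imaginary and drops out upon taking $\Re$. Consequently only the Calder\'on part and the impedance part survive, and
\[
\Re\left\langle \begin{pmatrix}\wvarphi\\\wpsi\end{pmatrix}, A(s)\begin{pmatrix}\wvarphi\\\wpsi\end{pmatrix}\right\rangle = \Re\left\langle \begin{pmatrix}\wvarphi\\\wpsi\end{pmatrix}, B(s)\begin{pmatrix}\wvarphi\\\wpsi\end{pmatrix}\right\rangle_\Gamma + \Re\langle \wpsi, F(s)\wpsi\rangle .
\]

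Next I would insert the two available lower bounds into the two surviving terms. Lemma~\ref{lemma:B-pos} bounds the Calder\'on term from below by $\beta\,\min(1,|s|^2)\,\Re s\,\bigl(\norm{s^{-1}\wvarphi}^2_{H^{-1/2}(\Gamma)} + \norm{s^{-1}\wpsi}^2_{H^{1/2}(\Gamma)}\bigr)$, while the positivity assumption \eqref{F-pos} bounds the impedance term from below by $c_\sigma\,\abs{s^{-1}\wpsi}^2_X$ for $\Re s \ge \sigma > \sigma_0$. Using $\Re s \ge \sigma$ and $\min(1,|s|^2) \ge \min(1,\sigma^2)$, the prefactor in the Calder\'on bound is controlled from below by the $s$-independent constant $\beta_\sigma := \beta\,\min(1,\sigma^2)\,\sigma > 0$.

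It then remains to reassemble the right-hand side in the $X$-norm. Recalling $\norm{w}_X^2 = \norm{w}^2_{H^{1/2}(\Gamma)} + \abs{w}_X^2$ from \eqref{X-norm}, the $H^{1/2}(\Gamma)$-contribution of $s^{-1}\wpsi$ coming from the Calder\'on term combines with the $X$-seminorm contribution coming from the impedance term to produce the full $\norm{s^{-1}\wpsi}_X^2$. Choosing $\alpha_\sigma := \min(\beta_\sigma, c_\sigma) > 0$ (valid precisely because $\sigma > \sigma_0$ is needed for $c_\sigma$ to be available) yields the asserted estimate. I do not expect a serious obstacle here: the only genuinely conceptual point is the cancellation of the skew block via the conjugate symmetry of the chosen pairing, and the rest is the routine bookkeeping of merging the $H^{1/2}$-norm and the $X$-seminorm into $\norm{\cdot}_X$ and tracking how $\alpha_\sigma$ degenerates as $\sigma \to \sigma_0$.
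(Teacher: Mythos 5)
Your proposal is correct and follows essentially the same route as the paper's proof: decompose $A(s)$ into the Calder\'on part, the skew off-diagonal block, and the impedance block, drop the skew block from the real part, then combine Lemma~\ref{lemma:B-pos} with \eqref{F-pos} and merge the $H^{1/2}(\Gamma)$-norm and $X$-seminorm via \eqref{X-norm}, arriving at the same constant $\alpha_\sigma=\min\bigl(\beta\min(1,\sigma^2)\,\sigma,\,c_\sigma\bigr)$. The only difference is cosmetic: you spell out the conjugate-symmetry argument for why the skew block is purely imaginary, which the paper leaves implicit in its first equality.
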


\begin{proof} By Lemma~\ref{lemma:B-pos} and condition \eqref{F-pos}, we have for $\Re s \ge \sigma >0$
\begin{align*}
&\Re \left\langle
\begin{pmatrix}
\wvarphi\\
\wpsi
\end{pmatrix},
\left(\Bimp(s)
+
\begin{pmatrix}
0 &0 \\
0 & F(s)
\end{pmatrix}
\right)
\begin{pmatrix}
\wvarphi\\
\wpsi
\end{pmatrix}
\right\rangle
\\
&=
\Re \left\langle
\begin{pmatrix}
\wvarphi\\
\wpsi
\end{pmatrix},
B(s)
\begin{pmatrix}
\wvarphi\\
\wpsi
\end{pmatrix}
\right\rangle
+\text{Re }\langle \wpsi, F(s)\wpsi \rangle
\\ &\ge
\beta \min(1,\left|s\right|^2)
\,\sigma
\left(\norm{s^{-1}\wvarphi}^2_{H^{-1/2}(\Gamma)}+
\norm{s^{-1}\wpsi}^2_{H^{1/2}(\Gamma)}
\right)+c_\sigma \snorm{s^{-1}\wpsi}^2_X
\\ &\ge
 \alpha_\sigma \left(\norm{
s^{-1}\wvarphi}^2_{H^{-1/2}(\Gamma)}
+\norm{
s^{-1}\wpsi}^2_{H^{1/2}(\Gamma)}
+\snorm{
s^{-1}\wpsi}^2_{X}\right)
\\ &=
 \alpha_\sigma \left(\norm{
s^{-1}\wvarphi}^2_{H^{-1/2}(\Gamma)}
+\norm{
s^{-1}\wpsi}^2_{X}\right)
,
\end{align*}
where $\alpha_\sigma = \min \bigl( \beta \min(1,\sigma^2)
\,\sigma, c_\sigma\bigr)>0$.
\end{proof}\ \\

A direct consequence of the coercivity is the bound of the inverse, for $\Re s \ge \sigma > \LB{\sigma_0 \geq} 0$,
\begin{align}\label{boundAm1}
\norm{A^{-1}(s)}_{H^{-1/2}(\Gamma)\times X \leftarrow H^{1/2}(\Gamma)\times X'}\le C_\sigma \abs{s}^2.
\end{align}
Conversely, with the unique solution $(\wvarphi,\wpsi)$ of \eqref{Asoperator}, we use the representation formula to construct
\begin{equation}\label{rep-uscat}
\wuscat = S(s) \wvarphi + D(s)s^{-1} \wpsi,
\end{equation}
which is a solution to the Helmholtz equation with $\wvarphi=-\pn \wuscat$ and $\wpsi =s\gamma \wuscat$
that, by its very construction, satisfies the boundary condition
\eqref{Generalbc_s}.

Collecting the arguments in this subsection, we thus obtain the following well-posedness result for the time-harmonic scattering problem
\begin{equation}\label{bvp-gibc-s}
\begin{aligned}
s^2 \wuscat -\Delta \wuscat &= 0 \qquad\text{in }\Omega
\\[1mm]
 \pn \wuscat  - F(s)s\wuscat &= \wginc 
 \quad\ \text{on } \Gamma.
\end{aligned}
\end{equation}

\begin{proposition}[Well-posedness]\label{freq_ana}
Let $\Re s >\LB{\sigma_0 \geq }0$, $F(s)$ satisfy \eqref{F-bound} and \eqref{F-pos}, and let $\wginc \in X'$.
Then, the time-harmonic scattering problem \eqref{bvp-gibc-s} has a unique solution $\wuscat\in H^1(\Omega)$. This solution is given by the representation formula \eqref{rep-uscat}, where $(\wvarphi,\wpsi)\in H^{-1/2}(\Gamma)\times X$ is the unique solution of the boundary system \eqref{Asoperator}. Moreover, $\wvarphi=-\pn \wuscat$ and $ \wpsi =s\gamma \wuscat$.
The norms of $\wuscat$ and $(\wvarphi,\wpsi)$ are bounded polynomially in $|s|$ in terms of the norm of 
${\widehat g}^\text{inc}$.
 \qed
\end{proposition}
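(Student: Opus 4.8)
The plan is to reduce the boundary value problem \eqref{bvp-gibc-s} to the boundary integral system \eqref{Asoperator}, to solve the latter by a variational (Lax--Milgram) argument resting on the coercivity of $A(s)$, and to transport the result back to the PDE via the representation formula. I would first settle the boundary system. On the product space $V := H^{-1/2}(\Gamma)\times X$, consider the sesquilinear form $(u,v)\mapsto\langle u, A(s)v\rangle$: by Lemma~\ref{lmbound} it is bounded on $V\times V$, and by Lemma~\ref{lmcoerc} it is coercive, with coercivity constant $\alpha_\sigma|s|^{-2}$ (the scalar factors $s^{-1}$ in the lower bound of Lemma~\ref{lmcoerc} contribute exactly $|s|^{-2}$). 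The complex Lax--Milgram theorem then makes $A(s)\colon V\to V'$ an isomorphism onto the antidual $V'=H^{1/2}(\Gamma)\times X'$. Since $\wginc\in X'$ the right-hand side $(0,-\wginc)$ lies in $V'$, so \eqref{Asoperator} has a unique solution $(\wvarphi,\wpsi)\in V$; the same coercivity yields the inverse bound \eqref{boundAm1}, hence $\|(\wvarphi,\wpsi)\|_V\le C_\sigma|s|^2\|\wginc\|_{X'}$, which is polynomial in $|s|$.

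Next I would pass back to the PDE. Defining $\wuscat$ by \eqref{rep-uscat} and using the mapping properties of $S(s)$ and $D(s)$, the function $\wuscat$ lies in $H^1(\Omega)$ and solves the Helmholtz equation automatically, since layer potentials are Helmholtz solutions off $\Gamma$. The delicate point --- and what I expect to be the main obstacle --- is the identification of the Cauchy data: one must show $\wvarphi=-\pn\wuscat$ and $\wpsi=s\gamma\wuscat$. This is precisely where the jump relations of the layer potentials are needed. Applying them to \eqref{rep-uscat} expresses the actual data $(-\pn\wuscat,s\gamma\wuscat)$ through the boundary integral operators acting on $(\wvarphi,\wpsi)$, and the algebraic structure $\Bimp(s)=B(s)+(\,\cdots)$ behind identity \eqref{Bimpsys} forces these data to coincide with $(\wvarphi,\wpsi)$. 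With the data identified, the second row of \eqref{Asoperator}, combined with \eqref{Bimpsys} applied to $\wuscat$, collapses exactly to the boundary condition $\pn\wuscat-F(s)s\wuscat=\wginc$, so $\wuscat$ solves \eqref{bvp-gibc-s}.

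Finally I would close uniqueness and the norm bound. Given any $H^1(\Omega)$-solution of \eqref{bvp-gibc-s}, identity \eqref{Bimpsys} together with the boundary condition shows that its Cauchy data solve \eqref{Asoperator}; injectivity of $A(s)$ (again from coercivity) pins these data down uniquely, and the Kirchhoff representation \eqref{kirchhoff} reconstructs $\wuscat$ from its data, giving uniqueness of $\wuscat$ itself. The polynomial-in-$|s|$ bound follows from $\|\wuscat\|_{H^1(\Omega)}\le\|S(s)\|\,\|\wvarphi\|_{H^{-1/2}(\Gamma)}+|s|^{-1}\|D(s)\|\,\|\wpsi\|_{H^{1/2}(\Gamma)}$, the operator bounds $|s|$ and $|s|^{3/2}$ for $S(s)$ and $D(s)$, the embedding $X\hookrightarrow H^{1/2}(\Gamma)$, and the bound on $(\wvarphi,\wpsi)$ from the first step. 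Thus I expect the potential-theoretic Cauchy-data identification to be the only genuinely subtle ingredient; the variational solvability and the estimates are routine once Lemmas~\ref{lmbound} and~\ref{lmcoerc} are in hand.
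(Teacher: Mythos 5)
Your proposal is correct and follows essentially the same route as the paper, whose proof of Proposition~\ref{freq_ana} consists precisely in collecting the arguments of Subsection~\ref{subsec:time-harmonic}: unique solvability of the boundary system \eqref{Asoperator} from the boundedness and coercivity of $A(s)$ (Lemmas~\ref{lmbound} and~\ref{lmcoerc}, yielding \eqref{boundAm1}), construction of $\wuscat$ by the representation formula \eqref{rep-uscat} with the Cauchy-data identification coming from the jump relations, and uniqueness via the identity \eqref{Bimpsys} applied to an arbitrary $H^1(\Omega)$ solution. The only cosmetic difference is that you spell out the Lax--Milgram step, the coercivity constant $\alpha_\sigma|s|^{-2}$, and the resulting polynomial bounds, which the paper leaves implicit.
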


We further note that, with the same proof as that of Proposition 5.2 in \cite{BL19}, we obtain the $H^1$ bound
\begin{equation} \label{uscat-norm}
\norm{\wuscat }_{H^1(\Omega)} \le C_\sigma\, |s|^{5/2} \norm{{\widehat g}^\text{inc}}_{X'}, \qquad \Re s \ge \sigma> \LB{\sigma_0 \geq} 0.
\end{equation}
For $x\in\Omega$ with dist$(x,\Gamma)\ge\delta>0$, the bounds (5.12)--(5.13) in \cite{BL19} (which are based on Lemma 6 in \cite{BLM11}) together with the bound \eqref{boundAm1} yield the pointwise bound
\begin{equation}  \label{uscat-x}
\abs{\wuscat (x)} \le C_{\sigma,\delta}\, |s|^{3} \norm{\wginc}_{X'}, \qquad \Re s \ge \sigma> \LB{\sigma_0 \geq} 0.
\end{equation}

%
%

\subsection{Time-dependent scattering from generalized impedance boundary conditions}
\label{subsec:time-dep}
The above construction in the frequency domain extends to the time domain in a straightforward way, using the notation and results for temporal convolutions described in Section~\ref{subsec:gibc} (ii).
We start from the time-dependent version of \eqref{Bimpsys}, which holds for solutions $\uscat$ of the wave equation with
$\uscat \in H^r_0(0,T;H^1_\text{loc}(\Omega))$ for some real $r\ge 0$,
\begin{equation}\label{Bimpsys-t}
\Bimp (\pt)
\begin{pmatrix}
-\pn \uscat\\
 \pt\gamma \uscat
\end{pmatrix}
=
\begin{pmatrix}
0\\
-\pn \uscat
\end{pmatrix}.
\end{equation}
This leads us to the time-dependent version of \eqref{Asoperator},
\begin{align}\label{Asoperator-t}
A(\pt)\begin{pmatrix}
\varphi\\
\psi
\end{pmatrix}
=
\begin{pmatrix}
0\\
-\ginc
\end{pmatrix}.
\end{align}
We note that with $Z(s)=A(s)^{-1}$, which is bounded by \eqref{boundAm1}, the solution to this convolution equation is given as the convolution
\begin{align}\label{Zsoperator-t}
\begin{pmatrix}
\varphi\\
\psi
\end{pmatrix}
= Z(\pt)
\begin{pmatrix}
0\\
-\ginc
\end{pmatrix}.
\end{align}
We then use the Kirchhoff representation formula to construct 
\begin{equation}\label{rep-uscat-t}
\uscat = S(\pt) \varphi + D(\pt)\pt^{-1} \psi,
\end{equation}
which is a solution to the wave equation with the boundary data $\varphi=-\pn \uscat$ and $\psi=\pt\gamma \uscat$ and which satisfies the generalized impedance boundary condition.

With the arguments presented in the course of this section, we thus obtain an analogue of Proposition~\ref{freq_ana} for the time-dependent scattering problem
\eqref{eq:scat_wave}.
As the main interest in this paper lies in the numerical approximation, we do not carry out the details of the proof.

\begin{proposition}[Well-posedness]\label{t_ana} Let $F$ satisfy the polynomial bound \eqref{F-bound} and the positivity condition \eqref{F-pos} \LB{and let $\ginc \in H^{r}_0(0,T; X')$, for  $r\in\R$. Then  the time-dependent scattering problem \eqref{eq:scat_wave} has a unique solution $\uscat\in H^{r-5/2}_0(0,T;H^1(\Omega))$. This solution is given by the representation formula \eqref{rep-uscat-t}, where $(\varphi,\psi)\in H^{r-2}_0(0,T;H^{-1/2}(\Gamma)\times X)$ is the unique solution of the boundary system \eqref{Asoperator-t}. Moreover, $\varphi=-\pn \uscat$ and $ \psi =\pt\gamma \uscat$. 
The corresponding norms of $\uscat$ and $(\varphi,\psi)$ are bounded in terms of the norm of 
$\ginc  \in H^r_0(0,T; X')$.}
\end{proposition}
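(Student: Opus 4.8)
The plan is to lift the frequency-domain well-posedness of Proposition~\ref{freq_ana} to the time domain by means of the operational calculus for temporal convolutions recalled in Section~\ref{subsec:gibc}(ii), in particular the correspondence \eqref{sobolev-bound}: an operator family bounded by $|s|^{\mu}$ in a half-plane $\Re s\ge\sigma$ induces a convolution operator mapping $H^{m}_0(0,T;\cdot)$ into $H^{m-\mu}_0(0,T;\cdot)$. Every estimate proved for fixed $s$ in Subsection~\ref{subsec:time-harmonic} thus translates directly into a statement about loss of temporal regularity, with the polynomial growth of the constants in $\sigma^{-1}$ being harmless on the finite interval $[0,T]$.

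First I would solve the boundary system. Since $A(s)$ is invertible with $Z(s)=A(s)^{-1}$ bounded by $C_\sigma|s|^{2}$ from $H^{1/2}(\Gamma)\times X'$ to $H^{-1/2}(\Gamma)\times X$ by \eqref{boundAm1}, the correspondence \eqref{sobolev-bound} with $\mu=2$ yields a bounded convolution operator $Z(\pt)\colon H^{r}_0(0,T;H^{1/2}(\Gamma)\times X')\to H^{r-2}_0(0,T;H^{-1/2}(\Gamma)\times X)$. Applying it to the data $(0,-\ginc)^{T}$, whose first component vanishes and whose second lies in $H^{r}_0(0,T;X')$, defines $(\varphi,\psi)$ as in \eqref{Zsoperator-t} and places it in $H^{r-2}_0(0,T;H^{-1/2}(\Gamma)\times X)$, with norm controlled by that of $\ginc$. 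Uniqueness follows from the composition rule $Z(\pt)A(\pt)=I$, so that $(\varphi,\psi)$ is the only solution of \eqref{Asoperator-t}. For the scattered field I would not compose the layer potentials with $Z(\pt)$ termwise, since $S(\pt)\varphi$ would only lie in $H^{r-3}_0(0,T;H^1(\Omega))$; instead I would invoke the sharper composite bound \eqref{uscat-norm}, which controls the full solution operator $\ginc\mapsto\wuscat$ by $C_\sigma|s|^{5/2}$ from $X'$ to $H^1(\Omega)$. The correspondence \eqref{sobolev-bound} with $\mu=5/2$ then gives $\uscat\in H^{r-5/2}_0(0,T;H^1(\Omega))$, with the asserted bound in terms of $\ginc$.

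The remaining step, and the one requiring the most care, is to verify that the $\uscat$ built from \eqref{rep-uscat-t} genuinely solves \eqref{eq:scat_wave} and reproduces the densities, i.e.\ that $\varphi=-\pn\uscat$, $\psi=\pt\gamma\uscat$, and that the generalized impedance boundary condition holds. This is the time-domain counterpart of the argument preceding Proposition~\ref{freq_ana}: frequencywise, $\wuscat=S(s)\wvarphi+D(s)s^{-1}\wpsi$ solves the Helmholtz equation and, by the jump relations encoded in $\Bimp(s)$ together with \eqref{Asoperator-t}, has exactly these boundary data; transporting each of these identities through \eqref{sobolev-bound} and the composition rule converts the Helmholtz equation into the wave equation, the trace and normal-derivative relations into their temporal-convolution analogues, and \eqref{Asoperator-t} into the boundary condition of \eqref{eq:scat_wave}. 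The main obstacle is bookkeeping rather than conceptual: one must confirm that the operational calculus of Section~\ref{subsec:gibc}(ii), stated there for a single Hilbert space, applies verbatim to operator families acting between the product spaces $H^{-1/2}(\Gamma)\times X$ and $H^{1/2}(\Gamma)\times X'$ and to the potential operators into $H^1(\Omega)$, that causality ($g=0$ for $t<0$) propagates through every convolution so the zero initial conditions in \eqref{eq:scat_wave} are inherited, and that the Helmholtz-to-wave and jump identities remain valid in the vector-valued setting of distributions supported on $t\ge0$. As the authors indicate, these are exactly the ingredients already present in \cite{L94,LS09,BL19}, so I would assemble the statement by citing that machinery rather than reproving it.
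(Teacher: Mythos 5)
Your proposal follows essentially the same route as the paper, which deliberately leaves the proof as a sketch: its accompanying remark states that the smoothness claims follow from the bounds \eqref{boundAm1} and \eqref{uscat-norm} via the Plancherel-formula argument of Section~\ref{subsec:gibc}(ii), which is precisely your application of \eqref{sobolev-bound} with $\mu=2$ for the densities $(\varphi,\psi)$ and $\mu=5/2$ for the field $\uscat$, together with the composition rule for uniqueness. Your observation that one must use the composite bound \eqref{uscat-norm} rather than composing $S(\pt)$ with $Z(\pt)$ termwise (which would only give $H^{r-3}_0$ regularity) correctly identifies why the stated $H^{r-5/2}_0(0,T;H^1(\Omega))$ regularity comes out.
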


\LB{The smoothness requirements in the above proposition follow from bounds \eqref{uscat-norm} and \eqref{boundAm1} via the Plancherel formula argument given in Section~\ref{subsec:gibc}. The bound \eqref{uscat-x} further implies that
for $x\in\Omega$ with dist$(x,\Gamma)\ge\delta>0$, there is the pointwise bound
\begin{equation}  \label{uscat-x-t}
\norm{\uscat (x,\cdot)}_{H^{r-3}_0(0,T;\mathbb{R})} \le C_{T,\delta}\, \,\norm{g^\text{inc}}_{H^{r}_0(0,T;X')}.
\end{equation}}

\section{Semi-discretization in time by convolution quadrature}
\label{sec:cq}

\subsection{Convolution quadrature}
We consider convolution quadrature constructed from the Laplace transform of the convolution kernel and an A-stable linear multistep method, as studied in \cite{L94}. Given the Laplace transform $K(s)$, $\Re s \ge \sigma$, of a (distributional) convolution kernel $k(t)$, $t\ge 0$,
and a function $g(t)$, $0\le t \le T$,
we approximate the convolution $K(\partial_t)g=k*g$ by a discrete convolution with stepsize $\tau>0\,$:
\begin{align*}
	\left( K(\partial_t^\tau)g\right) (t):= \sum_{j\ge 0} \omega_j \, g(t-j\tau),
\end{align*}
which is defined for $0\le t \le T$, but usually considered only on the grid $t_n=n\tau$, so that only grid values of $g$ are required.
The convolution quadrature weights are defined as the coefficients of the generating power series
\begin{align*}
	\sum_{j=0}^{\infty}\omega_j \, \zeta^j:=K\left(\dfrac{\delta(\zeta)}{\tau}\right), 
\end{align*}
where we choose $\delta(\zeta)$ as the generating polynomial of the $p$th order backward differentiation formula (BDF),
$$
\delta(\zeta)= \sum_{\ell=0}^{p}\frac1\ell (1-\zeta)^\ell, \qquad p=1,2.
$$ 
We assume $p\le 2$, because only for $p\le 2$ the method is A-stable, i.e. $\Re \delta(\zeta)\ge 0$ for $|\zeta|\le 1$.

An important property of this convolution quadrature is that there is still an operational calculus: just as $K(\pt)L(\pt)g=(KL)(\pt)g$, 
we also have the composition rule
$$
K(\pt^\tau)L(\pt^\tau)g=(KL)(\pt^\tau)g.
$$ 
In particular, if $A(s)$ is an invertible linear operator for $\Re s \ge \sigma$, then
the solution of the convolution equation $A(\pt)\phi = g$ is given by $\phi=A^{-1}(\pt)g$, and the solution of the convolution quadrature approximation to the convolution equation, $A(\pt^\tau)\phi = g$, is given by $\phi=A^{-1}(\pt^\tau)g$, i.e., by the convolution quadrature for the Laplace transform $A^{-1}(s)$. This will be a key observation in the following.

As is shown in \cite{L94}, the convolution quadrature based on the $p$th order BDF method ($p=1,2$) yields an $O(\tau^p)$ approximation provided that $g$ together with its extension by $0$ to $t<0$ is sufficiently regular. As we will make repeated use of this result, we give a precise formulation for the convenience of the reader.

\begin{lemma}[\cite{L94}, Theorem 3.2] \label{lem:cq-error}
Let $X$ and $Y$ be complex Hilbert spaces, and let $K(s): X \to Y$, for $\Re s>0$, be an analytic family of linear operators bounded by 
\begin{equation}\label{K-bound}
\| K(s) \|_{Y \leftarrow X} \le M_\sigma \, |s|^\mu, \qquad \Re s \ge \sigma >\LB{\sigma_0 \geq} 0.
\end{equation}
Let $g \in H^{r+1/2}_0(0,T;X)$ (see Section~\ref{subsec:gibc} for the notation), where 
$$
r>p \quad\text{ and }\quad r-\mu -1> p.
$$
Then, the error of the convolution quadrature based on the $p$th order BDF method ($p\le 2$) is bounded by
$$
\| K(\pt^\tau) g (t) - K(\pt) g (t) \|_Y \le C_T \, \tau^p \, \| g \|_{H^{r+1/2}_0(0,T;X)}\ , \qquad 0\le t \le T,
$$
where $C_T$ depends only on $\mu$ and $r$ and $p$ and is proportional to $\LB{e^{\sigma_0T}M_{\sigma_0+1/T}}$.
\end{lemma}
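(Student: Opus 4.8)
The statement is Theorem 3.2 of \cite{L94}; I outline how I would reprove it. The plan is to pass to the frequency domain, compare the \emph{symbol} $K(\delta(e^{-s\tau})/\tau)$ of the discrete convolution with the symbol $K(s)$ of the exact convolution, and then integrate the resulting symbol error against $\hat g$ using Plancherel, optimizing the contour to extract the constant $C_T \propto e^{\sigma_0 T} M_{\sigma_0+1/T}$.

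First I would record contour representations of both convolutions on the vertical line $\Re s = \sigma' := \sigma_0 + 1/T$. Since $g \in H^{r+1/2}_0(0,T;X)$ extends by zero to a function in $H^{r+1/2}(\R,X)$ whose Laplace transform $\hat g(s)$ is analytic and square-integrable with weight $|s|^{2(r+1/2)}$ on this line, the exact convolution is
\[
K(\pt)g(t) = \frac{1}{2\pi i}\int_{\sigma' - i\infty}^{\sigma' + i\infty} K(s)\,\hat g(s)\, e^{st}\, ds.
\]
For the discrete convolution I would use that the generating power series of the weights is $K(\delta(\zeta)/\tau)$, so that on the grid $t_n = n\tau$ a Cauchy integral over $\zeta = e^{-s\tau}$, $\Re s = \sigma'$, yields
\[
K(\pt^\tau)g(t_n) = \frac{1}{2\pi i}\int_{\sigma' - i\pi/\tau}^{\sigma' + i\pi/\tau} K\!\left(\frac{\delta(e^{-s\tau})}{\tau}\right) \hat g_\tau(s)\, e^{st_n}\, ds,
\]
where $\hat g_\tau$ is the $(2\pi i/\tau)$-periodization of $\hat g$ arising from sampling $g$ at the grid. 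A-stability, $\Re\delta(\zeta)\ge 0$ for $|\zeta|\le 1$, guarantees that $\delta(e^{-s\tau})/\tau$ stays in the half-plane where the bound \eqref{K-bound} is available, so every integrand is well defined.

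\textbf{The key step} is the symbol error estimate. Using the order-$p$ consistency of the BDF-$p$ generating polynomial, $\delta(e^{-z})/z = 1 + \lo(z^p)$ as $z \to 0$, one has $|s'-s| = \lo(\tau^p|s|^{p+1})$ with $s' = \delta(e^{-s\tau})/\tau$; combined with the Cauchy-type derivative bound $\norm{K'(\zeta)}_{Y\leftarrow X} \le C|\zeta|^{\mu-1}$ (from analyticity and \eqref{K-bound}), this gives
\[
\norm{K\!\left(\frac{\delta(e^{-s\tau})}{\tau}\right) - K(s)}_{Y\leftarrow X} \le C\, \tau^p\, |s|^{\mu+p}, \qquad \Re s = \sigma',\ |s|\tau \le c_0,
\]
with a complementary bound for $|s|\tau \ge c_0$ that again retains a factor $\tau^p|s|^{\mu+p}$ by exploiting A-stability together with the decay of $\hat g$ at high frequencies.

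Finally I would assemble the error as the integral of the symbol difference against $\hat g$, plus the periodization defect coming from $\hat g_\tau - \hat g$. The factor $|s|^{\mu+p}$ is absorbed by Cauchy--Schwarz against $\norm{|s|^{r+1/2}\hat g}_{L^2}$, which converges precisely because $r - \mu - 1 > p$; the condition $r > p$ ensures the sampling/periodization contribution is likewise $\lo(\tau^p)$. Taking the supremum over $0 \le t \le T$ produces the factor $e^{\sigma' T} = e\, e^{\sigma_0 T}$ and the constant $M_{\sigma'} = M_{\sigma_0 + 1/T}$, which gives the stated form of $C_T$. The main obstacle is the uniform control of the symbol difference across both frequency regimes $|s|\tau \lessgtr c_0$ while simultaneously handling the periodization error; this is where the smoothness assumption $g \in H^{r+1/2}_0$ and the vanishing of the initial data are essential.
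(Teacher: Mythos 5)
The central step of your proposal fails. Your symbol estimate rests on the derivative bound $\|K'(\zeta)\|_{Y\leftarrow X}\le C|\zeta|^{\mu-1}$, and this does \emph{not} follow from analyticity together with \eqref{K-bound}: Cauchy's formula, applied on a disk around $\zeta$ that must stay inside the half-plane where \eqref{K-bound} is available, only gives $\|K'(\zeta)\|_{Y\leftarrow X}\le C M_\sigma|\zeta|^{\mu}/d(\zeta)$, where $d(\zeta)$ is the distance from $\zeta$ to the line $\Re s=\sigma$. On your contour $\Re s=\sigma_0+1/T$ this distance is $O(1/T)$ uniformly in $\Im s$, so the correct bound is $\|K'(\zeta)\|_{Y\leftarrow X}\le C\,T\,M_{\sigma_0+1/(2T)}|\zeta|^{\mu}$ --- one full power of $|s|$ worse than you claim. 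A scalar counterexample is the unit time shift $K(s)=e^{-s}$ (so $\mu=0$, $M_\sigma= 1$), for which $|K'(s)|=e^{-\Re s}$ is of order one on the contour, not $O(|s|^{-1})$. With the correct derivative bound the low-frequency symbol error is $O(\tau^p|s|^{\mu+p+1})$, and this extra power is exactly why the lemma's hypothesis reads $r-\mu-1>p$ rather than $r-\mu>p$: Cauchy--Schwarz against $\| |s|^{r+1/2}\widehat g\|_{L^2}$ requires $\int_{\Re s=\sigma'}|s|^{2(\mu+p+1)-2r-1}|ds|<\infty$, i.e.\ $r>\mu+p+1$. Your own bookkeeping betrays the slip: with your claimed estimate $\tau^p|s|^{\mu+p}$, convergence would need only $r>\mu+p$, which contradicts your assertion that the integral converges ``precisely because $r-\mu-1>p$''. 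There is a second, smaller gap in the regime $|s|\tau\ge c_0$: to apply \eqref{K-bound} to $K(\delta(e^{-s\tau})/\tau)$ you need $\Re\,\delta(e^{-s\tau})/\tau\ge\sigma>\sigma_0$, and A-stability only gives $\Re\,\delta\ge 0$; this needs a separate argument for BDF1/2 (compare the proof of Lemma~\ref{lem:cq-stab}), and in that regime the factor $\tau^p$ comes from the smoothness weight, $|s|^{\mu-r}\le (\tau/c_0)^{r-\mu}\le C\tau^{p+1}$, not from the symbol difference ``retaining'' $\tau^p|s|^{\mu+p}$.

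Separately, your route through the periodized transform $\widehat g_\tau$ is a detour that \cite{L94} (whose Theorem~3.2 is cited here without proof, and whose argument is what the paper invokes in the proof of Lemma~\ref{lem:cq-error-Hm}) avoids entirely. The discrete convolution is the convolution of $g$ with the distribution $\sum_{j\ge0}\omega_j\delta_{j\tau}$, whose Laplace transform is $K_\tau(s)=K(\delta(e^{-s\tau})/\tau)$; hence the operational-calculus identity $K(\pt^\tau)g=K_\tau(\pt)g$ holds for all $t$, not just at grid points, and the error $(K_\tau-K)(\pt)g$ has a contour representation against the \emph{same} $\widehat g$. No aliasing or periodization defect ever arises, and the entire proof reduces to the uniform symbol estimate $\sup_{\Re s=\sigma'}\|(K_\tau(s)-K(s))s^{-r}\|_{Y\leftarrow X}\le C\tau^p$, split into the two frequency regimes as above. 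If you repair the derivative bound and switch to this representation, your argument becomes the standard one; as written, both the key estimate and the unestimated periodization term leave genuine gaps.
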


We will also need an error bound in the $H^m_0(0,T;Y)$ norm, for some $m\ge 0$. 

\begin{lemma} \label{lem:cq-error-Hm}
Let $m\ge 0$. In the situation of Lemma~\ref{lem:cq-error}, let $g \in H^{m+r}_0(0,T;X)$, where again
$$
r>p \quad\text{ and }\quad r-\mu -1> p.
$$
Then, the error of the convolution quadrature based on the $p$th order BDF method ($p\le 2$) is bounded by
$$
\| K(\pt^\tau) g  - K(\pt) g  \|_{H^m_0(0,T;Y)} \le C_T \, \tau^p \, \| g \|_{H^{m+r}_0(0,T;X)}\ , \qquad 0\le t \le T,
$$
where $C_T$ depends only on $\mu$ and $r$ and $p$ and is proportional to \LB{$e^{\sigma_0 T} M_{\sigma_0+1/T}$.}
\end{lemma}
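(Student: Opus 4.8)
The plan is to reduce the bound to the case $m=0$ and then recover general $m$ by commuting time derivatives through the quadrature. Write $E:=K(\pt^\tau)g-K(\pt)g$ for the error. Because the quadrature weights $\omega_j$ are constants, $K(\pt^\tau)$ is the convolution of $g$ with the discrete measure $\sum_{j\ge0}\omega_j\,\delta_{j\tau}$, while $K(\pt)=(\mathcal L^{-1}K)*g$ is a continuous convolution; both commute with $\pt$. Since $g\in H^{m+r}_0(0,T;X)$ is causal, so is $\pt^m g\in H^r_0(0,T;X)$, and therefore
\[
\pt^m E = K(\pt^\tau)\,\pt^m g - K(\pt)\,\pt^m g .
\]
Invoking the equivalence $\|f\|_{H^m_0(0,T;Y)}\approx\|\pt^m f\|_{L^2(0,T;Y)}$ recalled in Section~\ref{subsec:gibc}, it then suffices to prove the assertion for $m=0$ in the $L^2$-in-time norm, applied to $h:=\pt^m g\in H^r_0(0,T;X)$, namely
\[
\| K(\pt^\tau)h - K(\pt)h \|_{L^2(0,T;Y)} \le C_T\,\tau^p\,\|h\|_{H^r_0(0,T;X)} .
\]

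This base case is the $L^2$-in-time counterpart of Lemma~\ref{lem:cq-error} and is obtained by the same Laplace--Plancherel argument as in \cite{L94}; the only change is that one keeps the $L^2(0,T;Y)$ norm rather than passing to the pointwise bound through the time-Sobolev embedding $H^{1/2+\eps}(\R)\hookrightarrow C(\R)$, and it is precisely this that lowers the regularity demand from $H^{r+1/2}_0$ to $H^r_0$. Concretely, I would extend $h$ by zero and transform along the vertical line $\Re s=\sigma$ with $\sigma=\sigma_0+1/T$. Since $\sum_{j\ge0}\omega_j e^{-sj\tau}=K(\delta(e^{-s\tau})/\tau)$, the Laplace transform of the error is
\[
\widehat E(s)=\Bigl(K\bigl(\tfrac{\delta(e^{-s\tau})}{\tau}\bigr)-K(s)\Bigr)\widehat h(s),
\]
and Parseval turns $\|e^{-\sigma\cdot}E\|_{L^2(0,\infty;Y)}^2$ into $\tfrac1{2\pi}\int_\R\|\widehat E(\sigma+i\omega)\|_Y^2\,d\omega$. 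Into this I insert a symbol consistency estimate of the form
\[
\Bigl\|K\bigl(\tfrac{\delta(e^{-s\tau})}{\tau}\bigr)-K(s)\Bigr\|_{Y\leftarrow X}\le C_T\,\tau^p\,|s|^{\mu+p+1},\qquad \Re s=\sigma,
\]
which follows from the BDF consistency $\delta(e^{-s\tau})/\tau=s+O(\tau^p s^{p+1})$, the boundedness \eqref{K-bound}, a Cauchy estimate for $K'$ in the strip $\Re s>\sigma_0$ (the source of the extra power of $|s|$ and of a factor absorbed into $C_T$), and $A$-stability $\Re\delta(\zeta)\ge0$, which keeps $\delta(e^{-s\tau})/\tau$ in a half-plane where \eqref{K-bound} applies and supplies the bound in the regime $|s\tau|\gtrsim1$. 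The weight $e^{-\sigma t}$ built into the transform produces the constant dependence $C_T\propto e^{\sigma_0 T}M_{\sigma_0+1/T}$ via $e^{\sigma T}=e\,e^{\sigma_0 T}$.

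Substituting $h=\pt^m g$ and using $\|\pt^m g\|_{H^r_0(0,T;X)}\approx\|g\|_{H^{m+r}_0(0,T;X)}$ then yields the claimed estimate with the stated constant. The main obstacle is the base-case symbol estimate: one must bound $K(\delta(e^{-s\tau})/\tau)-K(s)$ uniformly along the whole contour, matching the BDF consistency in the regime $|s\tau|\lesssim1$ (where the order $\tau^p$ is generated) with the $A$-stability bound in the regime $|s\tau|\gtrsim1$ (where one uses $1\le(|s|\tau)^p$), while tracking the exact power of $|s|$. The two hypotheses then enter as follows: $r-\mu-1>p$ makes the Parseval integral $\int_\R|s|^{2(\mu+p+1)}\|\widehat h(s)\|_X^2\,d\omega$ converge against the $H^r$-weight $|s|^{2r}$, and $r>p$ guarantees that enough derivatives of the data vanish at $t=0$ for the $p$th-order method to attain its full order without reduction.
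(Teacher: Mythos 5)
Your proposal takes essentially the same route as the paper: its proof is exactly the Plancherel/Parseval argument along the contour $\Re s=\sigma$, which bounds the $H^m_0(0,T;Y)$ error by $\|g\|_{H^{m+r}_0(0,T;X)}$ times $\sup_{\Re s=\sigma}\bigl\|\bigl(K(\delta(e^{-s\tau})/\tau)-K(s)\bigr)s^{-r}\bigr\|_{Y\leftarrow X}$, and then cites Section~3 of Lubich (1994) for the bound $c_\sigma\tau^p$ on this quantity under the conditions on $r$ --- the same symbol-consistency estimate you sketch via BDF consistency and A-stability in the two regimes $|s\tau|\lesssim 1$ and $|s\tau|\gtrsim 1$. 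Your preliminary reduction to $m=0$ by commuting $\pt^m$ through both convolutions is only a cosmetic reorganization, since the Plancherel argument handles the $m$ time derivatives in precisely that way.
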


\begin{proof} The proof uses arguments and estimates from the proofs in Section~3 of \cite{L94}. Using the Plancherel formula,
$K(\pt^\tau) g  - K(\pt) g$ is bounded in the $H^m_0(0,T;Y)$ norm by  $\| g \|_{H^{m+r}_0(0,T;X)}$ times a constant multiple of the factor
$$
\sup_{\Re s=\sigma} \norm{\left( K\Bigl( \frac{\delta(e^{-s\tau})}{\tau} \Bigr) - K(s) \right) s^{-r} }_{Y\leftarrow X}.
$$
In Section~3 of \cite{L94}, this expression is bounded by $c_\sigma \tau^p$ under the stated conditions on $r$. This yields the result.
\end{proof}

We will also use the following stability result.

\begin{lemma} \label{lem:cq-stab}
Let $K(s)$ be as in Lemma~\ref{lem:cq-error}, with $\mu\ge 0$ in \eqref{K-bound}. Let $g \in H^{r+1/2}_0(0,T;X)$, where 
$$
r>\mu.
$$
Then, the result of the convolution quadrature based on the $p$th order BDF method ($p\le 2$) is bounded by
$$
\| K(\pt^\tau) g (t)  \|_Y \le C_T \,  \| g \|_{H^{r+1/2}_0(0,T;X)}\ , \qquad 0\le t \le T,
$$
where $C_T$ depends on $T$ but is independent of $\tau$ and $t$.
\end{lemma}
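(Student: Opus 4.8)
The plan is to reduce the claimed pointwise-in-time bound to a uniform (in $\tau$) bound on the convolution-quadrature symbol along a vertical line, exactly as in the proof of Lemma~\ref{lem:cq-error-Hm}, and then to exploit that the exponent count closes precisely under the hypothesis $r>\mu$. First I would recall that the convolution quadrature is causal and that its (two-sided) Laplace transform factorises: if $u=K(\pt^\tau)g$, then $\widehat u(s)=K\bigl(\delta(e^{-s\tau})/\tau\bigr)\,\widehat g(s)$, since $\sum_{j\ge0}\omega_j e^{-sj\tau}=K(\delta(e^{-s\tau})/\tau)$ by the definition of the weights. To pass from a pointwise estimate to a Sobolev estimate I would use the embedding $H^\rho_0(0,T;Y)\hookrightarrow C([0,T];Y)$ for $\rho>1/2$; thus it suffices to bound $\|u\|_{H^\rho_0(0,T;Y)}$ for a single $\rho\in(1/2,\,r+\tfrac12-\mu]$, a range that is nonempty precisely because $r>\mu$.

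Next, by the Plancherel argument already used for Lemma~\ref{lem:cq-error-Hm} (the machinery of Section~3 of \cite{L94}, applied to the weighted causal transform on $\Re s=\sigma$ for a fixed $\sigma>\sigma_0$), I would bound $\|u\|_{H^\rho_0(0,T;Y)}$ by $\|g\|_{H^{r+1/2}_0(0,T;X)}$ times the factor
$$
\sup_{\Re s=\sigma}\Bigl\| K\!\Bigl(\tfrac{\delta(e^{-s\tau})}{\tau}\Bigr)\, s^{\rho-r-1/2}\Bigr\|_{Y\leftarrow X}.
$$
Everything then rests on showing this supremum is finite uniformly in $0<\tau\le\tau_0$. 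Here I would invoke the two standard estimates for A-stable BDF ($p\le 2$) from \cite{L94}: writing $w=\delta(e^{-s\tau})/\tau$, A-stability together with $\delta(1)=0$ keeps $w$ in the right half-plane, with $\Re w\ge c\,\sigma$ (so that $\Re w\ge\sigma'>\sigma_0$ after possibly enlarging $\sigma$), while the polynomial nature of $\delta$ and periodicity in $\mathrm{Im}\,s$ give $|w|\le C|s|$ uniformly on the whole line $\Re s=\sigma$. Feeding these into the bound $\|K(w)\|_{Y\leftarrow X}\le M_{\sigma'}|w|^\mu\le M_{\sigma'}C^\mu|s|^\mu$ (valid since $\mu\ge0$) yields $\|K(w)\,s^{\rho-r-1/2}\|\le C\,|s|^{\mu+\rho-r-1/2}$, whose exponent is nonpositive for $\rho\le r+\tfrac12-\mu$; since $|s|\ge\sigma$, the supremum is then bounded independently of $\tau$.

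Collecting these steps gives $\|K(\pt^\tau)g\|_{C([0,T];Y)}\le C_T\,\|g\|_{H^{r+1/2}_0(0,T;X)}$ with $C_T$ depending on $T$ (through the equivalence of the weighted norm on $(0,\infty)$ with the $H^\rho_0(0,T)$-norm, i.e.\ a factor of the type $e^{\sigma T}$) and on $\sigma,\mu,r,p$, but not on $\tau$ or $t$. Note that, unlike in Lemmas~\ref{lem:cq-error}--\ref{lem:cq-error-Hm}, no subtraction of $K(\pt)g$ is needed: here we only want boundedness, so the symbol is estimated directly rather than its difference with $K(s)$. The main obstacle is the uniform-in-$\tau$ control of the symbol in the second paragraph: the two bounds on $w=\delta(e^{-s\tau})/\tau$ are exactly where A-stability (hence the restriction $p\le 2$) enters, and the fact that the exponent closes with room to place $\rho$ strictly above $1/2$ is precisely the content of the hypothesis $r>\mu$.
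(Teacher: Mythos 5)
Your proposal is correct and follows essentially the same route as the paper's proof: identify $K(\pt^\tau)g = K_\tau(\pt)g$ with $K_\tau(s)=K\bigl(\delta(e^{-s\tau})/\tau\bigr)$, use $\delta(1)=0$ and A-stability to get the uniform-in-$\tau$ bound $\|K_\tau(s)\|_{Y\leftarrow X}\le C|s|^\mu$ on the half-plane, apply the Plancherel mapping property \eqref{sobolev-bound} to $K_\tau$, and conclude via the embedding into $C([0,T];Y)$, which is exactly where $r>\mu$ enters. Your explicit treatment of the lower bound $\Re\bigl(\delta(e^{-s\tau})/\tau\bigr)\ge c\,\sigma$ is a point the paper leaves implicit, but it does not change the argument.
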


\begin{proof} We note that $K(\pt^\tau) g=K_\tau(\pt)g$ with $K_\tau(s)=K(\delta(e^{-s\tau})/\tau)$. 
Using the fact that $\delta(1)=0$ and that there is no other zero of $\delta$ in the closed unit disk, 
we find that for some $c$ independent of $\tau\le T$,
$$
\abs{\frac{\delta(e^{-s\tau})}{\tau}} \le c |s| \quad\ \text{ for } \ \Re s \ge 0.
$$
It then follows that, with  ${\widetilde M}_\sigma= c^\mu M_\sigma$,
$$
\| K_\tau(s) \|_{Y \leftarrow X} \le {\widetilde M}_\sigma \, |s|^\mu, \qquad \Re s \ge \sigma >0.
$$
By the same argument as in \eqref{sobolev-bound}, now for $K_\tau$ in place of $F$, we obtain the linear operator, bounded uniformly in $\tau$,
$$
K_\tau(\pt) \,:\, H^{r+1/2}_0(0,T;X) \to H^{r-\mu+1/2}_0(0,T;Y) .
$$
Since $r-\mu>0$, we have that $H^{r-\mu+1/2}_0(0,T;Y)$ is continuously embedded in $C([0,T],Y)$, and hence the results follows.
\end{proof}

%
%
\subsection{Convolution quadrature for generalized impedance boundary conditions}

Applying the convolution quadrature method to \eqref{Asoperator-t} yields the discrete convolution equation
\begin{align}\label{AoperatorCQ}
A(\pt^\tau)\begin{pmatrix}
\varphi^\tau\\
\psi^\tau
\end{pmatrix}=
\begin{pmatrix}
0\\
-F(\pt^\tau)\pt^\tau \gamma u^\text{inc}+\pn u^\text{inc}
\end{pmatrix}.
\end{align}
We have the following error bound pointwise in time.

\begin{theorem}\label{thm:err-cq}
Let $F$ satisfy the polynomial bound (2.3) and the positivity
condition (2.4). Let $A(s)$ be the corresponding operator from \eqref{Asoperator} and let 
$\gamma u^\text{inc}\in H^r_0(0,T;X)$ and $\pn u^\text{inc}\in H^r_0(0,T;X')$ for a sufficiently large $r$.
Then, the error of the temporal semi-discretization \eqref{AoperatorCQ} by convolution quadrature based on a BDF method of order $p\le 2$ is bounded by
\begin{align*}
\max_{0\le n \le N}
\norm{\begin{pmatrix}
\varphi^\tau(t_n)-\varphi(t_n)\\
\psi^\tau(t_n)-\psi(t_n)
\end{pmatrix}}_{H^{-1/2}(\Gamma)\times X}
\le C_T \,\tau^p \left(\norm{\gamma u^\text{inc}}_{H^r_0(0,T;X)} + \norm{\pn u^\text{inc}}_{H^r_0(0,T;X')}\right),
\end{align*}
 where $C_T$ depends only on the boundary $\Gamma$ and on the final time $T\ge N\tau$.
\end{theorem}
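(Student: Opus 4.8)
The plan is to write both the exact boundary densities $(\varphi,\psi)$ and their time-discrete approximations $(\varphi^\tau,\psi^\tau)$ as the operational calculus (in $\pt$, respectively $\pt^\tau$) of one and the same operator-valued transfer function applied to the incident data, and then to invoke the convolution quadrature error bound of Lemma~\ref{lem:cq-error} a single time. First I would set $a=\gamma u^{\text{inc}}$, $b=\pn u^{\text{inc}}$, and $Z(s)=A(s)^{-1}$, which is analytic for $\Re s>\sigma_0$ and bounded by $C_\sigma|s|^2$ according to \eqref{boundAm1}. I introduce the transfer function
\[
E(s)\begin{pmatrix} a\\ b\end{pmatrix} := Z(s)\begin{pmatrix} 0\\ -F(s)s\,a+b\end{pmatrix},
\qquad E(s):X\times X'\to H^{-1/2}(\Gamma)\times X .
\]
Since $\ginc=F(\pt)\pt\,a-b$, the exact solution of \eqref{Asoperator-t} is then $(\varphi,\psi)=E(\pt)(a,b)$.

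The central step is to check that the \emph{same} $E$ generates the discrete solution through $\pt^\tau$. Writing $L(s)(a,b)=(0,\,-F(s)s\,a+b)$, so that $E(s)=Z(s)L(s)$, the operational calculus of convolution quadrature gives $(\varphi^\tau,\psi^\tau)=Z(\pt^\tau)\bigl(0,\,-F(\pt^\tau)\pt^\tau a+b\bigr)$ for the discrete problem \eqref{AoperatorCQ}; the composition rule $F(\pt^\tau)\pt^\tau=(F(s)s)(\pt^\tau)$ shows $L(\pt^\tau)(a,b)=(0,\,-F(\pt^\tau)\pt^\tau a+b)$, and the associativity $Z(\pt^\tau)L(\pt^\tau)=(ZL)(\pt^\tau)$ then yields $(\varphi^\tau,\psi^\tau)=E(\pt^\tau)(a,b)$. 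Consequently the error collapses to a single convolution quadrature defect,
\[
\begin{pmatrix} \varphi^\tau-\varphi\\ \psi^\tau-\psi\end{pmatrix}
=\bigl(E(\pt^\tau)-E(\pt)\bigr)\begin{pmatrix} a\\ b\end{pmatrix}.
\]

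It then remains to verify the hypotheses of Lemma~\ref{lem:cq-error} for $K=E$. The family $E(s)$ is analytic for $\Re s>\sigma_0$, and combining \eqref{boundAm1} with the polynomial bound \eqref{F-bound} on $F(s)$ (which makes $L(s)$ bounded by $C_\sigma|s|^{\max(\mu+1,0)}$) gives $\|E(s)\|\le C_\sigma|s|^{\nu}$ with $\nu=2+\max(\mu+1,0)$ for $\Re s\ge\sigma>\sigma_0$, the constant growing at most polynomially as $\sigma\downarrow\sigma_0$. I would then apply Lemma~\ref{lem:cq-error} with source space $X\times X'$, target $H^{-1/2}(\Gamma)\times X$, and BDF order $p\le2$, choosing the lemma's regularity exponent equal to $r-1/2$; this is admissible as soon as $r-1/2>p$ and $r-1/2-\nu-1>p$, which is what is meant by ``sufficiently large $r$'' in the statement. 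Using $(a,b)\in H^{r}_0(0,T;X\times X')$ this produces, uniformly for $0\le t_n\le T$,
\[
\left\|\begin{pmatrix} \varphi^\tau(t_n)-\varphi(t_n)\\ \psi^\tau(t_n)-\psi(t_n)\end{pmatrix}\right\|_{H^{-1/2}(\Gamma)\times X}
\le C_T\,\tau^p\,\bigl\|(a,b)\bigr\|_{H^{r}_0(0,T;X\times X')},
\]
and the claimed form of the right-hand side follows from $\|(a,b)\|_{H^{r}_0(0,T;X\times X')}^2=\|a\|_{H^{r}_0(0,T;X)}^2+\|b\|_{H^{r}_0(0,T;X')}^2$, with $C_T$ inheriting from Lemma~\ref{lem:cq-error} a dependence only on $\Gamma$ and $T$ (through the factor $e^{\sigma_0T}M_{\sigma_0+1/T}$).

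I expect the main obstacle to be not analytical but structural: recognizing that the discretized right-hand side of \eqref{AoperatorCQ}, which itself already contains the convolution quadrature $F(\pt^\tau)$, must be fused together with $Z(\pt^\tau)$ into the \emph{single} discrete transfer operator $E(\pt^\tau)$. This is exactly what the composition and associativity rules of convolution quadrature deliver, and it is what reduces the estimate to one application of Lemma~\ref{lem:cq-error}; without it, one would be forced to control the interplay between the approximation of $Z$ and the approximation of $F$ separately. Once this reduction is in place, the polynomial bound on $E$ and the bookkeeping for $r$ are routine.
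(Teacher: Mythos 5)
Your proposal is correct and takes essentially the same route as the paper's proof: the paper likewise uses the operational calculus to rewrite the discrete solution as $Z(\pt^\tau)$ applied to the discretized right-hand side (fusing $Z$ and $F$ into one discrete transfer operator, which is exactly your $E(\pt^\tau)$), and then applies Lemma~\ref{lem:cq-error} once, using the bounds \eqref{boundAm1} and \eqref{F-bound}. Your write-up merely makes explicit the composition step and the bookkeeping of the exponents that the paper leaves implicit.
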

\begin{proof} With $Z(s)=A(s)^{-1}$,  we have the convolution \eqref{Zsoperator-t}.
The convolution quadrature method \eqref{AoperatorCQ} is equivalent to the convolution quadrature approximation of \eqref{Zsoperator-t}, that is,
\begin{align}\label{Zsoperator-t-tau}
\begin{pmatrix}
\varphi^\tau\\
\psi^\tau
\end{pmatrix}
= Z(\pt^\tau)
\begin{pmatrix}
0\\
-F(\pt^\tau)\pt^\tau\gamma u^\text{inc}+\pn u^\text{inc}
\end{pmatrix}.
\end{align}
In view of the bound \eqref{boundAm1} of $Z(s)$ and the bound \eqref{F-bound} of $F(s)$ for $\Re s>0$, the result follows from the general convolution quadrature error bound given in Lemma~\ref{lem:cq-error}.
\end{proof}\ \\

The convolution quadrature approximation to the scattered wave is then obtained by discretizing the Kirchhoff representation formula \eqref{rep-uscat-t}:
\begin{equation}\label{rep-uscat-t-tau}
\uscatau = S(\pt^\tau) \varphi^\tau + D(\pt^\tau)(\pt^\tau)^{-1} \psi^\tau.
\end{equation}
We have the following optimal-order error bounds.

\begin{theorem}\label{thm:semidisconv-uscat}
Let $F$ satisfy the polynomial bound (2.3) and the positivity
condition (2.4). Let $A(s)$ be the corresponding operator from \eqref{Asoperator} and let 
$\gamma u^\text{inc}\in H^r_0(0,T;X)$ and $\pn u^\text{inc}\in H^r_0(0,T;X')$ for a sufficiently large $r$.
Then, the error of the CQ semi-discretization \eqref{rep-uscat-t-tau} with \eqref{AoperatorCQ}, based on a BDF method of order $p\le 2$, is bounded by
\begin{align*}
\max_{0\le n \le N}
\norm{\uscatau(t_n)- \uscat(t_n) }_{H^{1}(\Omega)}
\le C_T\,\tau^p \left(\norm{\gamma u^\text{inc}}_{H^r_0(0,T;X)} + \norm{\pn u^\text{inc}}_{H^r_0(0,T;X')}\right),
\end{align*}
 where $C_T$ depends only on the boundary $\Gamma$ and on the final time $T\ge N\tau$.
Furthermore, for $x\in\Omega$ with dist$(x,\Gamma)\ge \delta>0$ we have the pointwise error bound
\begin{align*}
\max_{0\le n \le N}
\abs{\uscatau(x,t_n)- \uscat(x,t_n) }
\le C_{\delta,T}\,\tau^p \left(\norm{\gamma u^\text{inc}}_{H^r_0(0,T;X)} + \norm{\pn u^\text{inc}}_{H^r_0(0,T;X')}\right),
\end{align*}
 where $C_{\delta,T}$ depends on the boundary $\Gamma$, on $\delta$ and  on the final time $T$.
\end{theorem}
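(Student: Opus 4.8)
The plan is to exploit the operational calculus of convolution quadrature, which collapses the entire discretization into a single composite transfer operator to which the error estimate of Lemma~\ref{lem:cq-error} applies directly. First I would introduce the solution operator $R(s)\colon X' \to H^1(\Omega)$, $R(s)\wginc = \wuscat$, which is the composition $R(s)(\,\cdot\,) = [\,S(s),\ D(s)s^{-1}\,]\,Z(s)\,(0,-\,\cdot\,)^\top$ of the representation formula \eqref{rep-uscat} with the inverse boundary operator $Z(s)=A(s)^{-1}$. As a composition of analytic operator families, $R(s)$ is analytic for $\Re s>\sigma_0$, and the field bound \eqref{uscat-norm} gives $\norm{R(s)}_{H^1(\Omega)\leftarrow X'}\le C_\sigma|s|^{5/2}$ for $\Re s\ge\sigma>\sigma_0$. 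Composing $R(s)$ with point evaluation at $x$ with $\mathrm{dist}(x,\Gamma)\ge\delta$ yields, by \eqref{uscat-x}, an analytic scalar-valued family bounded by $C_{\sigma,\delta}|s|^3$.

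Next I would observe that, by their very construction \eqref{rep-uscat-t} and \eqref{rep-uscat-t-tau}, the continuous and discrete fields are $\uscat = R(\pt)\ginc$ and $\uscatau = R(\pt^\tau)\ginc^\tau$, where $\ginc = F(\pt)\pt\gamma\uinc-\pn\uinc$ and $\ginc^\tau = F(\pt^\tau)\pt^\tau\gamma\uinc-\pn\uinc$ are the exact and discrete data appearing in \eqref{Asoperator-t} and \eqref{AoperatorCQ}. Applying the composition rule $K(\pt^\tau)L(\pt^\tau)=(KL)(\pt^\tau)$ to absorb the factors $F(\pt^\tau)\pt^\tau$ and $R(\pt^\tau)$ into single discrete convolutions, the error separates into two pieces that depend on the incident data only through the fixed, $\tau$-independent functions $\gamma\uinc$ and $\pn\uinc$:
\begin{align*}
\uscatau - \uscat = \bigl[(R\,F\,s)(\pt^\tau)-(R\,F\,s)(\pt)\bigr]\gamma\uinc - \bigl[R(\pt^\tau)-R(\pt)\bigr]\pn\uinc.
\end{align*}

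I would then estimate each term with Lemma~\ref{lem:cq-error}. The composite symbol $R(s)F(s)s\colon X\to H^1(\Omega)$ is analytic and, combining \eqref{uscat-norm}, \eqref{F-bound} and the extra factor $s$, is bounded by $C_\sigma|s|^{\mu+7/2}$, while $R(s)\colon X'\to H^1(\Omega)$ is bounded by $C_\sigma|s|^{5/2}$. Applying the lemma to $g=\gamma\uinc\in H^r_0(0,T;X)$ and to $g=\pn\uinc\in H^r_0(0,T;X')$ respectively, and adding the two contributions, yields the $O(\tau^p)$ bound in the $H^1(\Omega)$ norm uniformly at the grid points, provided the data regularity $r$ exceeds the thresholds required by Lemma~\ref{lem:cq-error} with $\mu$ replaced by the composite exponents $\mu+7/2$ and $5/2$ (this is the meaning of ``sufficiently large $r$''). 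For the pointwise estimate I would repeat the argument verbatim with $R(s)$ replaced by its composition with evaluation at $x$, using the $|s|^3$ bound from \eqref{uscat-x}, so that the relevant composite exponents become $\mu+4$ and $3$.

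Since all the genuinely analytical work — the coercivity-based inverse bound \eqref{boundAm1} on $Z(s)$ and the field bounds \eqref{uscat-norm} and \eqref{uscat-x} — is already established, the theorem is essentially a corollary of the convolution-quadrature operational calculus together with Lemma~\ref{lem:cq-error}; the only real care needed is the bookkeeping of the powers of $|s|$ and the verification that the regularity thresholds are met. The mildest subtlety, and the step I would be most careful about, is justifying that the discrete chain $\uscatau=R(\pt^\tau)\ginc^\tau$ may be collapsed by the composition rule even though $\ginc^\tau$ itself contains the discrete convolution $F(\pt^\tau)\pt^\tau$; this is precisely what the operational calculus guarantees, so it reduces to checking that each factor has the form $K(\pt^\tau)$ with a polynomially bounded analytic symbol, which holds for $R(s)$, $F(s)$ and the multiplier $s$.
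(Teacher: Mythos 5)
Your proposal is correct and takes essentially the same route as the paper's own proof: the paper sets $U(s):=\bigl(S(s),D(s)s^{-1}\bigr)Z(s)$ (your $R(s)$, up to the packaging of the data vector), uses the operational calculus to write the discrete field as a single convolution quadrature applied to the incident data, and then invokes Lemma~\ref{lem:cq-error} with the bounds \eqref{uscat-norm} and \eqref{uscat-x}. Your explicit absorption of $F(\pt^\tau)\pt^\tau$ into composite symbols and the bookkeeping of the exponents $\mu+7/2$, $5/2$ (resp.\ $\mu+4$, $3$ for the pointwise bound) are precisely the details the paper leaves implicit.
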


\begin{proof} We note that by combining \eqref{rep-uscat-t-tau} and \eqref{Zsoperator-t-tau} and setting $U(s):= \bigl( S(s) , D(s)s^{-1} \bigr) Z(s)$, we have the convolution quadrature
\begin{equation}\label{rep-uscat-t-tau-cq}
\uscatau = U(\pt^\tau)
\begin{pmatrix}
0\\
-F(\pt^\tau)\pt^\tau\gamma u^\text{inc}+\pn u^\text{inc}
\end{pmatrix}.
\end{equation}
With the bounds for $U(s)$ given by \eqref{uscat-norm} and \eqref{uscat-x} (note that $\wuscat=U(s)\LB{
  \begin{pmatrix}
0\\-\wginc
  \end{pmatrix}}$), the result follows again from Lemma~\ref{lem:cq-error}.
\end{proof}

\begin{remark} Here we have only considered convolution quadrature based on multistep methods. For Runge--Kutta convolution quadrature, similar results can be obtained using the error bounds of \cite{BLM11}. This gives methods that are of higher than second order in time.
\end{remark}

\begin{remark}[Dependence of constants on $T$ and $\varepsilon$]
\LB{Throughout,  the constants in the estimates such as $C_T$ in Theorem~\ref{thm:semidisconv-uscat} are allowed to grow as $e^{\sigma T}$ as $T$ increases for an arbitrary  $\sigma \geq \sigma_0$; see Lemma~\ref{lem:cq-error} and \ref{lem:cq-error-Hm}. Further, the constants grow at most polynomially as $\sigma \rightarrow 0$; see Remark~\ref{rem:gibc}, \eqref{B-bound} and Lemma~\ref{lemma:B-pos}. Hence in the case of boundary conditions (A), (B1), and (C) we can let $\sigma = 1/T$ and obtain bounds that grow only polynomially with $T$. For (B2), we set $\sigma = \sigma_0+1/T = \varepsilon^2 \|\mathcal{H}\|_{L^\infty(\Gamma)}^2+1/T
$ to obtain bounds with no visible exponential increase for long computational times $T \propto \varepsilon^{-2}$.}

\LB{
For (A) \JN{and (B)} the constants in the error estimates are independent of the small parameter $\varepsilon$. \JN{However, since the norm of $X$ corresponding to (B) depends inversely on $\varepsilon^{1/2}$, the error bounds in Theorem~\ref{thm:err-cq} and Theorem~\ref{thm:semidisconv-uscat} grow as $\varepsilon^{-1/2}$ in the case of the boundary conditions (B1) and (B2).}
}
\end{remark}

%

\section{Semi-discretization in space by boundary elements}
\label{sec:bem}
We consider a family of regular triangulations of the boundary $\Gamma$ with maximal meshwidths $h\to 0$ and boundary element spaces
$$
\Phi_h \subset H^{-1/2}(\Gamma),\quad
\Psi_h \subset X \subset H^{1/2}(\Gamma),
$$
and actually $\Phi_h \subset L^2(\Gamma)$.
With the spaces $\mX=H^{-1/2}(\Gamma)\times X$ and $\mX_h = \Phi_h \times \Psi_h\subset \mX$, we are thus in the more abstract situation of the following subsection.

\subsection{Galerkin semi-discretization: stability and quasi-optimality}

We have Hilbert spaces $\mX\subset \mH=\mH' \subset \mX'$ with dense and continuous inclusions, and we have an analytic family of
operators $A(s):\mX\to \mX'$, bounded by $M_\sigma |s|^2$ for $\Re s\ge \sigma >0$,
with a coercivity estimate (with $\alpha_\sigma>0$)
\begin{align*}
\text{Re } \big\langle
\widehat{\phi}, A(s)\widehat{\phi} \big\rangle
\ge \alpha_\sigma \| s^{-1}\widehat{\phi} \|_\mX^2
\qquad \text{for all }\ \widehat{\phi}\in \mX, \ \Re s\ge \sigma >0.
\end{align*}
The convolution equation
$ 
A(\pt)\phi=g
$ 
is approximated on the finite dimensional subspace $\mX_h\subset \mX$ by the Galerkin semi-discretization
\begin{align}
\label{galerkin}
\langle
\chi_h, (A(\pt)\phi_h)(t)
\rangle=
\langle
\chi_h, g(t)
\rangle \qquad \text{for all }\ \chi_h \in \mX_h \ \text{ and a.e. } t\in [0,T].
\end{align}
We let $\Pi_h$ be the $\mX$-orthogonal projection from $\mX$ onto $\mX_h$ so that for $\phi \in \mX$, $\Pi_h \phi$ is the best approximation to $\phi$ in $\mX_h$ with respect to the norm of $\mX$. The following lemma shows the stability of the Galerkin semi-discretization and yields an error bound reminiscent of C\'ea's lemma for elliptic problems.

\begin{lemma}\label{lem:galerkin} Let $m\ge 0$.
Under the above assumptions and provided that $g\in H_0^{m+2}(0,T;\mX')$, the  Galerkin semi-discretization \eqref{galerkin} has a unique solution $\phi \in H_0^{m}(0,T;\mX)$,  bounded by
\begin{align*}
\norm{\phi_h}_{H_0^{m}(0,T;\mX)}\le c_T\norm{g}_{H_0^{m+2}(0,T;\mX')}.
\end{align*}
The  error is bounded by
\begin{align*}
\norm{\phi_h(t)-\phi(t)}_{H_0^{m}(0,T;\mX)}
\le C_T\norm{\Pi_h\phi-\phi}_{H_0^{m+4}(0,T;\mX)}.
\end{align*}
The constants $c_T$ and $C_T$ are independent of $h$. They are inversely proportional to $\alpha_{1/T}$, and $C_T$  is additionally proportional to $M_{1/T}$.
\end{lemma}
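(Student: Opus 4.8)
The plan is to pass to the Laplace (frequency) domain, establish well-posedness and both bounds there with explicit dependence on $|s|$ and \emph{uniformly in $h$}, and then transfer everything back to the time domain by the Plancherel argument already invoked for \eqref{sobolev-bound}. The decisive structural point is that the Galerkin operator inherits both the boundedness $M_\sigma|s|^2$ and the coercivity constant $\alpha_\sigma$ of $A(s)$ simply by restriction to the subspace $\mX_h$, so none of the frequency-domain estimates will see the discretization.

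\emph{Frequency-domain well-posedness and stability.} For fixed $s$ with $\Re s=\sigma>0$ the semidiscrete problem reads: find $\wvar_h\in\mX_h$ with $\langle\chi_h,A(s)\wvar_h\rangle=\langle\chi_h,\widehat g\rangle$ for all $\chi_h\in\mX_h$. Since $\mX_h$ is finite-dimensional and the coercivity estimate restricts to $\mX_h$, the Lax--Milgram lemma gives a unique solution. Testing with $\chi_h=\wvar_h$, using coercivity and $\snorm{\langle\wvar_h,\widehat g\rangle}\le\norm{\wvar_h}_{\mX}\norm{\widehat g}_{\mX'}$, yields
\[
\alpha_\sigma\,|s|^{-2}\norm{\wvar_h}_{\mX}^2\le\norm{\wvar_h}_{\mX}\,\norm{\widehat g}_{\mX'},
\]
hence $\norm{\wvar_h}_{\mX}\le\alpha_\sigma^{-1}|s|^2\norm{\widehat g}_{\mX'}$; that is, the discrete solution operator $\widehat g\mapsto\wvar_h$ is bounded by $\alpha_\sigma^{-1}|s|^2$, independently of $h$. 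As in \eqref{sobolev-bound}, the growth $|s|^2$ means the associated convolution operator maps $H_0^{m+2}(0,T;\mX')\to H_0^{m}(0,T;\mX)$. Defining $\phi_h$ as this convolution applied to $g$ then gives existence, uniqueness (by injectivity of the Laplace transform), and the stability bound with $c_T$ proportional to $\alpha_{1/T}^{-1}$.

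\emph{Error bound (C\'ea-type).} The frequency-domain solutions satisfy Galerkin orthogonality $\langle\chi_h,A(s)(\wvar_h-\wvar)\rangle=0$ for all $\chi_h\in\mX_h$, where $\wvar=A(s)^{-1}\widehat g$ is the continuous solution. Setting $e_h=\wvar_h-\Pi_h\wvar\in\mX_h$ and splitting $e_h=(\wvar_h-\wvar)+(\wvar-\Pi_h\wvar)$, coercivity applied to $e_h$, the vanishing of the first contribution by Galerkin orthogonality, and $\norm{A(s)}_{\mX'\leftarrow\mX}\le M_\sigma|s|^2$ give
\[
\alpha_\sigma\,|s|^{-2}\norm{e_h}_{\mX}^2\le\Re\langle e_h,A(s)(\wvar-\Pi_h\wvar)\rangle\le M_\sigma|s|^2\,\norm{e_h}_{\mX}\,\norm{\wvar-\Pi_h\wvar}_{\mX},
\]
so that $\norm{e_h}_{\mX}\le(M_\sigma/\alpha_\sigma)|s|^4\norm{\wvar-\Pi_h\wvar}_{\mX}$, and by the triangle inequality $\norm{\wvar_h-\wvar}_{\mX}\le C_\sigma|s|^4\norm{\wvar-\Pi_h\wvar}_{\mX}$ with $C_\sigma=1+M_\sigma/\alpha_\sigma$. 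Since $\Pi_h$ is time-independent it commutes with the Laplace transform, so $\wvar-\Pi_h\wvar$ is the transform of $\phi-\Pi_h\phi$. Feeding this pointwise bound on $\Re s=\sigma$ into the Plancherel identity, the factor $|s|^4$ costing four time-derivatives, yields the stated estimate with the loss of four orders and $C_T$ proportional to $M_{1/T}/\alpha_{1/T}$.

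The genuine obstacle, everything else being careful bookkeeping of powers of $s$, is the quantitative Plancherel step: one chooses $\sigma=1/T$, passes between the intrinsic $H_0^r(0,T;\cdot)$ norms and the weighted Laplace-domain $L^2$ norms on $\Re s=\sigma$, and checks that the resulting multiplicative constant $e^{\sigma T}=e$ is harmless while the powers of $|s|$ translate exactly into the $+2$ (stability) and $+4$ (error) shifts in Sobolev order. This is precisely the mechanism of \cite{L94}; the only extra care is that the frequency estimates are pointwise norm inequalities rather than uniform operator bounds, which still suffices because the Plancherel argument integrates only the scalar quantities $\norm{\cdot}_{\mX}$ over the line $\Re s=\sigma$.
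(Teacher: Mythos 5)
Your proposal is correct and follows essentially the same route as the paper's proof: coercivity restricted to $\mX_h$ gives the $h$-uniform bound $\alpha_\sigma^{-1}|s|^2$ on the discrete solution operator, a C\'ea-type argument via Galerkin orthogonality costs the additional factor $M_\sigma|s|^2$, and the Plancherel mechanism of \cite{L94} converts the powers $|s|^2$ and $|s|^4$ into the Sobolev shifts $m+2$ and $m+4$. The only difference is presentational: the paper packages the discrete solution map as $A_h(s)^{-1}P_h$ with the $\mH$-orthogonal projection $P_h$ and applies the operator-valued Plancherel bound, whereas you derive the equivalent pointwise variational estimates on the line $\Re s=\sigma$ and integrate them.
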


\begin{proof} (a) We define $A_h(s):\mX_h \to \mX_h$ by setting, with the inner product $(\cdot,\cdot)$ of $\mH$,
$$
(\widehat\chi_h, A_h(s) \widehat\phi_h) = \langle \widehat\chi_h,A(s)\widehat\phi_h \rangle \qquad \forall \chi_h,\phi_h \in \mX_h.
$$
Let $P_h\colon \mX'\rightarrow \mX_h$ be the $\mH$-orthogonal projection, i.e., 
\begin{align*}
(\widehat\chi_h, P_h \widehat g)=\langle \widehat\chi_h,\widehat g\rangle \quad \forall \chi_h\in \mX_h\subset \mX, \ \widehat g \in \mX'.
\end{align*}

Then, $A_h(s)$ inherits the coercivity of $A(s)$ and is therefore invertible.
Testing the equation $A_h(s)\widehat\phi_h=P_h \widehat g$ with $\widehat\chi_h=  \widehat\phi_h$ and using the coercivity yields
\begin{align*}
&\alpha_\sigma\, \| s^{-1} \widehat\phi_h \|_\mX^2 \le \Re \langle \widehat\phi_h,A(s)\widehat\phi_h = 
\Re (\widehat\phi_h, A_h(s) \widehat\phi_h) 
= \Re (\widehat\phi_h, P_h \widehat g) = \Re \langle \widehat\phi_h,\widehat g\rangle
\le \| \widehat\phi_h \|_\mX \, \| \widehat g \|_{\mX'}
\end{align*}
so that $\widehat \phi_h=A_h(s)^{-1} P_h \widehat g$ is bounded by $\| \widehat \phi_h \|_\mX 
\le (1/\alpha_\sigma)\, |s|^2\,\| \widehat g \|_{\mX'}$. Hence,
\begin{equation}\label{Ah-inv-bound}
\| A_h(s)^{-1} P_h \|_{\mX \leftarrow \mX'} \le \frac 1{\alpha_\sigma}\, |s|^2\qquad\text{for }\ \Re s \ge \sigma >0.
\end{equation}
The Galerkin condition \eqref{galerkin} can be rewritten as
$$
(\chi_h, A_h(\pt)\phi_h) =
(\chi_h, P_h g) 
\qquad \text{for all }\ \chi_h \in \mX_h
$$
or equivalently as
$$
A_h(\pt)\phi_h = P_h g
$$
or again equivalently as
$$
\phi_h = A_h^{-1}(\pt)P_h g.
$$
In view of the bound \eqref{Ah-inv-bound}, the Plancherel formula argument of (ii) in Section~\ref{subsec:gibc} (for $\mu=2$) yields the stated bound of $\phi_h$.

(b) Adding $\Pi_h\phi$ on both sides of \eqref{galerkin} and rearranging yields
\begin{align*}
\left\langle
\chi_h, A(\pt)\Pi_h\phi\right\rangle=\langle\chi_h, A(\pt)\left(\Pi_h\phi
-\phi
\right)
\rangle+
\langle \chi_h,g\rangle \qquad \forall \chi_h\in X_h\subset X
\end{align*}
We denote the error and the defect by
\begin{alignat*}{2}
e_h&:=\phi_h-\Pi_h\phi \quad\quad\quad\quad  &&\text{ in }\mX,\\
d&:=A(\pt)\left(\Pi_h\phi 
-\phi
\right)&&\text{ in }\mX' .
\end{alignat*}
We obtain the error equation
\begin{align*}
A_h(\pt)e_h=P_h d,
\end{align*}
or rearranged
\begin{align*}
e_h=A_h^{-1}(\pt) P_h d.
\end{align*}
As in part (a) of this proof, the stated error bound then follows from \eqref{Ah-inv-bound} and the bound for~$A(s)$:
$$
\| e_h \|_{H_0^{m}(0,T;\mX)} \le c_T \| d \|_{H_0^{m+2}(0,T;\mX')} \le C_T\norm{\Pi_h\phi-\phi}_{H_0^{m+4}([0,T],\mX)}.
$$
This completes the proof.
\end{proof}

\pagebreak[3]

\subsection{Error bounds for the boundary element semi-discretization}
We now turn to error bounds in terms of the meshwidth $h$ for the problems (A)--(C) of the introduction.
We choose $\Phi_h$ to be a boundary element space with piecewise polynomial basis functions of degree $k-1$ (discontinuous if $k=1$ and continuous for $k>1$) and $\Psi_h$ to be a boundary element space of piecewise polynomials of degree $k\ge 1$ (globally continuous).

For $r\in [-1,1]$, let $\Pi_{\Phi_h}^r: H^r(\Gamma)\to \Phi_h$ and $\Pi_{\Psi_h}^r: H^r(\Gamma)\to \Psi_h$ denote the $H^r(\Gamma)$-orthogonal projection onto $\Phi_h$ and $\Psi_h$, respectively. For a regular family of triangulations, we then have the following standard bounds for the boundary element best-approximation error in $H^r(\Gamma)$:
\begin{equation}
\label{bem-approx}
\begin{aligned}
&\norm{\Pi_{\Phi_h}^r\eta-\eta}_{H^r(\Gamma)}\le C\,h^{k-r}\norm{\eta}_{H^{r+k-1}(\Gamma)},
\\
&\norm{\Pi_{\Psi_h}^r\eta-\eta}_{H^r(\Gamma)}\le C\,h^{k+1-r}\norm{\eta}_{H^{r+k}(\Gamma)}.
\end{aligned}
\end{equation}
We will also use a similar bound for the boundary element interpolation $I_h$ \bcl for $r=0,\frac12,1$: \ecl
\begin{equation}\label{bem-approx-ipol}
\norm{I_h\eta-\eta}_{H^r(\Gamma)}\le C\,h^{k+1-r}\norm{\eta}_{H^{r+k}(\Gamma)}.
\end{equation}

\noindent With these bounds, we obtain the following error bound for the space discretization errors.
We recall that $\mX=H^{-1/2}(\Gamma)\times X$, where $X\subset H^{1/2}(\Gamma)$ depends on the boundary condition (A), (B) or (C); see Remark~\ref{rem:gibc}.

\begin{theorem}\label{thm:err-bem} Let $m\ge 0$, and assume that the solution $(\varphi,\psi)$ of the time-dependent boundary integral equation \eqref{Asoperator-t} corresponding to the boundary conditions (A)--(C)
has the regularity $\varphi\in H^{m+4}_0(0,T;H^{k}(\Gamma))$ and $\psi\in H^{m+4}_0(0,T;H^{k+1}(\Gamma))$.
Then, the error of the boundary element spatial semi-discretization \eqref{galerkin} with \eqref{bem-approx}--\eqref{bem-approx-ipol}
is bounded by
\bcl
\begin{align*}
\norm{\begin{pmatrix}
\varphi_h-\varphi\\
\psi_h-\psi
\end{pmatrix}}_{H_0^{m}(0,T;H^{-1/2}(\Gamma)\times X)} \
&\le 
C \, ( h^{k+1/2}+\eps^{1/2} h^{k})
\qquad\qquad &\text{in case (A)},
\\
&\le 
C \, ( h^{k+1/2}+\eps^{-1/2} h^{k+1})
\qquad &\text{in case (B)},
\\[2mm]
&\le C \, h^{k+1/2}
\qquad\qquad\qquad\qquad &\text{in case (C)}.
\end{align*}
\ecl
The constant $C$ depends on the boundary $\Gamma$, on the final time $T$ and on the norms of the solution $(\varphi,\psi)$ in the spaces indicated above, but is independent of the meshwidth $h$ and, in case (A), of the small parameter $\eps$.

\end{theorem}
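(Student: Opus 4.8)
The plan is to apply the abstract Galerkin error bound of Lemma~\ref{lem:galerkin} in the concrete setting $\mX=H^{-1/2}(\Gamma)\times X$, $\mX_h=\Phi_h\times\Psi_h$, and then convert the resulting quasi-optimal bound into explicit powers of $h$ using the best-approximation estimates \eqref{bem-approx}. Since Lemma~\ref{lem:galerkin} gives
\[
\norm{(\varphi_h,\psi_h)-(\varphi,\psi)}_{H_0^{m}(0,T;\mX)}
\le C_T\,\norm{\Pi_h(\varphi,\psi)-(\varphi,\psi)}_{H_0^{m+4}(0,T;\mX)},
\]
the task reduces to estimating the best-approximation error of the exact solution in the $\mX$-norm. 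Because the $\mX$-orthogonal projection $\Pi_h$ yields the genuine best approximation in $\mX$, I am free to bound the projection error from above by the error of any convenient alternative approximation (for instance the componentwise $H^r(\Gamma)$-orthogonal projections $\Pi^r_{\Phi_h}$, $\Pi^r_{\Psi_h}$ or the interpolation $I_h$), at the cost of an $h$-independent constant.

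First I would treat the $\varphi$-component, which lives in $H^{-1/2}(\Gamma)$ in all three cases. Using \eqref{bem-approx} with $r=-1/2$ for the space $\Phi_h$ of degree $k-1$ gives $\norm{\Pi^{-1/2}_{\Phi_h}\varphi-\varphi}_{H^{-1/2}(\Gamma)}\le C\,h^{k+1/2}\norm{\varphi}_{H^{k}(\Gamma)}$, which after integrating in time produces the common $h^{k+1/2}$ term in every case. Second I would handle the $\psi$-component, where the norm of $X$ differs with the boundary condition; this is where the three cases split. In case (C), $X=H^{1/2}(\Gamma)$ with zero seminorm, so \eqref{bem-approx} with $r=1/2$ for the degree-$k$ space $\Psi_h$ gives $h^{k+1/2}$, matching the $\varphi$-contribution and yielding the pure $h^{k+1/2}$ bound. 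In case (A), the $X$-norm controls $\|\psi\|_{H^{1/2}(\Gamma)}$ plus $\eps^{1/2}\|\nabla_\Gamma\psi\|_{L^2(\Gamma)}$, so I would estimate the seminorm part via the $H^1(\Gamma)$-error ($r=1$ in \eqref{bem-approx}), giving $\eps^{1/2}h^{k}$, to be added to the $h^{k+1/2}$ from the $H^{1/2}$ part. In case (B), $X=H^{1/2}(\Gamma)$ with seminorm $\eps^{-1/2}\|\psi\|_{L^2(\Gamma)}$, so the seminorm contribution is controlled by the $L^2$-error ($r=0$), producing $\eps^{-1/2}h^{k+1}$ to be added to $h^{k+1/2}$.

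The main subtlety is that the $\mX$-norm on $\Psi_h$ is the combined norm $\|\cdot\|_{H^{1/2}(\Gamma)}+|\cdot|_X$, not a single $H^r(\Gamma)$-norm, so no single orthogonal projection is simultaneously optimal for both pieces. The clean way around this is to exploit that $\Pi_h$ gives the best approximation in $\mX$ and therefore dominates the error of \emph{any} fixed element of $\Psi_h$; I would choose the single approximant (either $I_h\psi$ or an $H^{1/2}$-projection followed by estimating the seminorm via inverse-free interpolation bounds \eqref{bem-approx-ipol}) and bound \emph{both} the $H^{1/2}$-part and the seminorm part of its error. Concretely, for a globally continuous piecewise-polynomial approximant such as $I_h\psi$, the interpolation bounds \eqref{bem-approx-ipol} for $r=0,\tfrac12,1$ deliver exactly the three rates $h^{k+1}$, $h^{k+1/2}$, $h^{k}$ needed to feed the $L^2$-, $H^{1/2}$-, and $H^1$-seminorm contributions respectively. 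Integrating these spatial bounds over the time interval and invoking the assumed temporal regularity $\varphi\in H^{m+4}_0(0,T;H^{k}(\Gamma))$, $\psi\in H^{m+4}_0(0,T;H^{k+1}(\Gamma))$ to absorb the $H_0^{m+4}$-in-time norm then completes the argument, with the $\eps$-dependence entering only through the explicit $\eps^{\pm1/2}$ prefactors on the seminorm terms and the constant $C$ otherwise independent of $h$ and (in case (A)) of $\eps$.
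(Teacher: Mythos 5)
Your proposal is correct and follows essentially the same route as the paper's proof: reduce to the best-approximation error via Lemma~\ref{lem:galerkin}, split the $\mX$-orthogonal projection componentwise (using $\Pi_{\Phi_h}^{-1/2}$ for $\varphi$ and, for $\psi$, dominating the $X$-best-approximation error by that of the single interpolant $I_h\psi$), and then invoke \eqref{bem-approx}--\eqref{bem-approx-ipol} with $r=0,\tfrac12,1$ to obtain the case-dependent rates and $\eps$-prefactors. In particular, your identified ``subtlety'' --- that the combined $X$-norm requires one fixed approximant whose error is measured in all relevant Sobolev norms, justified by the best-approximation property of the orthogonal projection --- is exactly the device used in the paper, including the commutation of the time-independent projection with $\pt^{m+4}$ so that the spatial bounds apply to the differentiated solution.
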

\begin{proof} The proof combines the error bound of Lemma~\ref{lem:galerkin} with the approximation estimates \eqref{bem-approx}--\eqref{bem-approx-ipol}.
For 
$$
\phi = \begin{pmatrix} \varphi \\ \psi \end{pmatrix} \in \mX = H^{-1/2}(\Gamma)\times X,
$$ 
the $\mX$-orthogonal projection $\Pi_h\phi$ equals
$$
\Pi_h\phi =  \begin{pmatrix} 
\Pi_{\Phi_h}^{-1/2}\varphi 
\\ 
\Pi_{X_h}\psi 
\end{pmatrix} ,
$$
where $\Pi_{X_h}$ is the $X$-orthogonal projection from $X$ onto $X_h$.

In case (C) we have simply $X=H^{1/2}(\Gamma)$, and then $\Pi_{X_h}\psi= \Pi_{\Psi_h}^{1/2}\psi$. Using \eqref{bem-approx} in Lemma~\ref{lem:galerkin} then yields the result.

In case (A) we have $X=H^1(\Gamma)$ equipped with the norm 
$\| \psi \|_X^2 = \| \psi \|_{H^{1/2}(\Gamma)}^2 + \eps \| \nabla\psi \|_{L^2(\Gamma)}^2$. We then find
\begin{align*}
&\norm{\Pi_{X_h}\psi-\psi}_{H_0^{m+4}(0,T;X)} 
= \norm{\pt^{m+4}(\Pi_{X_h}\psi-\psi)}_{L^2(0,T;X)}
= \norm{\Pi_{X_h}\pt^{m+4}\psi-\pt^{m+4}\psi}_{L^2(0,T;X)}
\\
&\le \norm{I_h\pt^{m+4}\psi-\pt^{m+4}\psi}_{L^2(0,T;X)}
\\
&
\le \biggl( \int_0^T \norm{I_h \pt^{m+4}\psi-\pt^{m+4}\psi}_{H^{1/2}(\Gamma)}^2
+ \eps \norm{I_h \pt^{m+4}\psi-\pt^{m+4}\psi}_{H^{1}(\Gamma)}^2 \biggr)^{1/2}.
\end{align*}
With \eqref{bem-approx-ipol} we thus obtain
$$
\norm{\Pi_{X_h}\psi-\psi}_{H_0^{m+4}(0,T;X)} \le C h^{k+1/2} + C\eps^{1/2} h^{k},
$$
where the constants depend on bounds of higher partial derivatives of $\varphi$ and $\psi$.

\bcl
For the case (B) we obtain similarly, with $X=H^{1/2}(\Gamma)$ equipped with the norm 
$\| \psi \|_X^2 = \| \psi \|_{H^{1/2}(\Gamma)}^2 + \eps^{-1} \| \psi \|_{L^2(\Gamma)}^2$, that
$$
\norm{\Pi_{X_h}\psi-\psi}_{H_0^{m+4}(0,T;X)} \le C h^{k+1/2} + C\eps^{-1/2} h^{k+1},
$$
where the constants depend on bounds of higher partial derivatives of $\varphi$ and $\psi$.
\ecl

The result then follows from Lemma~\ref{lem:galerkin}.
\end{proof}

%
%

\section{Full discretization by boundary elements and convolution quadrature}
\label{sec:cq-bem}

In this section we combine the error bounds of the temporal and spatial semi-discretizations for the full discretization of the system of time-dependent boundary integral equations \eqref{Asoperator-t} for the cases (A)--(C) of generalized impedance boundary conditions. Here, the approximation
$(\varphi_h^\tau,\psi_h^\tau)\approx (\varphi,\psi)$, which is in the boundary element space $\Phi_h\times\Psi_h$ pointwise in time, is determined by
\begin{align}\label{AoperatorCQ-BEM}
\left\langle \begin{pmatrix}
\chi_h\\
\eta_h
\end{pmatrix},
A(\pt^\tau)\begin{pmatrix}
\varphi^\tau_h\\
\psi^\tau_h
\end{pmatrix}
\right\rangle
=
\left\langle \begin{pmatrix}
\chi_h\\
\eta_h
\end{pmatrix},
\begin{pmatrix}
0\\
-F(\pt^\tau)\pt^\tau \gamma u^\text{inc}+\pn u^\text{inc}
\end{pmatrix}
\right\rangle
\quad\ \forall\, \chi_h\in\Phi_h, \,\eta_h\in \Psi_h.
\end{align}

\begin{theorem}\label{thm:err-cq-bem} Consider the full discretization \eqref{AoperatorCQ-BEM} by convolution quadrature based on a backward difference formula of order $p\le 2$ and by boundary elements of polynomial degree $k-1$ and $k\ge 1$ for the approximation of $\varphi$ and $\psi$, respectively.
Under the conditions of Theorems~\ref{thm:err-cq} and~\ref{thm:err-bem} (with $m\ge 1$), the error  is bounded by
\bcl
\begin{align*}
\max_{0\le n \le N}
\norm{\begin{pmatrix}
\varphi^\tau_h(t_n)-\varphi(t_n)\\
\psi^\tau_h(t_n)-\psi(t_n)
\end{pmatrix}}_{H^{-1/2}(\Gamma)\times X}
&\le 
C \, (\tau^p + h^{k+1/2}+\eps^{1/2} h^{k})
\qquad &\text{in case (A)},
\\
&\le 
C \, (\tau^p + h^{k+1/2}+\eps^{-1/2} h^{k+1})
\qquad &\text{in case (B)},
\\[2mm]
&\le C \, (\tau^p + h^{k+1/2})
\qquad &\text{in case (C)}.
\end{align*}
\ecl
The constant $C$ is independent $h$ and $\tau$ and $n$ with $t_n=n\tau\le T$, and of $\eps$ in case (A), but depends on $T$ and higher Sobolev norms of the incident wave $\gamma\uinc$ and $\pn\uinc$ and of the solution $(\varphi,\psi)$.
\end{theorem}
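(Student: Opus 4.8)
The plan is to bound the full error by inserting the time-continuous, space-discrete Galerkin solution $(\varphi_h,\psi_h)$ of \eqref{galerkin} as an intermediate object and splitting
\[
\begin{pmatrix}\varphi^\tau_h-\varphi\\ \psi^\tau_h-\psi\end{pmatrix}
= \begin{pmatrix}\varphi^\tau_h-\varphi_h\\ \psi^\tau_h-\psi_h\end{pmatrix}
+ \begin{pmatrix}\varphi_h-\varphi\\ \psi_h-\psi\end{pmatrix},
\]
so that the second (purely spatial) term is controlled by Theorem~\ref{thm:err-bem} and the first (purely temporal) term is the convolution quadrature error committed on the already space-discretized problem. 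The reason for choosing this particular intermediate is that the temporal error can then be estimated with a constant that is \emph{uniform in} $h$, thanks to the $h$-independent bound \eqref{Ah-inv-bound} on $A_h(s)^{-1}P_h$.

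For the spatial term I would invoke Theorem~\ref{thm:err-bem} directly, which provides the claimed $h$-rates ($h^{k+1/2}+\eps^{1/2}h^k$ in case (A), and so on) in the $H^m_0(0,T;H^{-1/2}(\Gamma)\times X)$ norm. Since the present statement is pointwise in time, I would pass from this Bochner--Sobolev norm to the maximum over the grid points using the continuous embedding $H^m_0(0,T;\,\cdot\,)\hookrightarrow C([0,T];\,\cdot\,)$, valid because $m\ge 1>1/2$; this is precisely where the hypothesis $m\ge 1$ enters.

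For the temporal term I would exploit the operational calculus of convolution quadrature. Writing the continuous Galerkin solution as $\phi_h=A_h^{-1}(\pt)P_h g$ and the fully discrete one as $\phi_h^\tau=A_h^{-1}(\pt^\tau)P_h g^\tau$, and noting that the data enter only through the operators $F(\pt),F(\pt^\tau)$ and $\pt,\pt^\tau$, the composition rule $K(\pt^\tau)L(\pt^\tau)=(KL)(\pt^\tau)$ lets me represent both $\phi_h$ and $\phi_h^\tau$ as one and the same transfer operator $W_h(s)$ --- built from $A_h^{-1}(s)P_h$, $F(s)$ and the scalar symbols --- applied through $\pt$, respectively $\pt^\tau$, to the fixed incident data $(\gamma\uinc,\pn\uinc)$. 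By \eqref{Ah-inv-bound} and the polynomial bound \eqref{F-bound}, $W_h(s)$ obeys $\norm{W_h(s)}\le C_\sigma\abs{s}^q$ with an exponent $q$ and a constant $C_\sigma$ that are both independent of $h$. The temporal error $\phi_h^\tau-\phi_h=\bigl(W_h(\pt^\tau)-W_h(\pt)\bigr)(\gamma\uinc,\pn\uinc)$ is then $O(\tau^p)$, pointwise in time and uniformly in $h$, by Lemma~\ref{lem:cq-error}, provided $\gamma\uinc$ and $\pn\uinc$ are sufficiently regular in time. Adding the two contributions yields the asserted bound.

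I expect the main obstacle to be establishing the $h$-uniformity of the convolution quadrature constant for the temporal term, i.e. verifying that the combined symbol $W_h(s)$ --- and in particular the space-discrete solution operator $A_h^{-1}(s)P_h$ --- is polynomially bounded in $\abs{s}$ with a constant independent of $h$. This is furnished by the $h$-independent coercivity underlying \eqref{Ah-inv-bound}, but it must be combined carefully with \eqref{F-bound} (whose exponent $\mu$ and constant $M_\sigma$ feed into $q$ and $C_\sigma$) and with the correct composition of the convolution quadratures for $A_h^{-1}$ and for the data-generating operators. A secondary technical point is the bookkeeping of the two different error measures --- the Bochner--Sobolev norm from Theorem~\ref{thm:err-bem} versus the pointwise-in-time bound from Lemma~\ref{lem:cq-error} --- and the tracking, in cases (A) and (B), of the dependence of all constants on $\eps$, which follows the same lines as in Theorem~\ref{thm:err-bem} and Remark~\ref{rem:gibc}.
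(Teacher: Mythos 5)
Your proposal is correct and follows essentially the same route as the paper's proof: the same splitting via the time-continuous Galerkin solution $(\varphi_h,\psi_h)$, the spatial term handled by Theorem~\ref{thm:err-bem} together with the embedding $H^m_0(0,T;\mX)\hookrightarrow C([0,T];\mX)$ for $m>1/2$, and the temporal term treated as a convolution quadrature error for the $h$-uniformly bounded symbol built from $A_h^{-1}(s)P_h$ and the data operator, estimated via \eqref{Ah-inv-bound} and Lemma~\ref{lem:cq-error}. Your explicit introduction of the combined symbol $W_h(s)$ merely spells out what the paper leaves implicit when it applies Lemma~\ref{lem:cq-error} to the difference of the two representations.
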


%
%
\begin{proof}
We split the error into
\begin{align*}
\begin{pmatrix}\varphi^{\tau}_h-\varphi\\
\psi^{\tau}_h-\psi
\end{pmatrix}
=
\begin{pmatrix}\varphi^{\tau}_h-\varphi_h\\
\psi^{\tau}_h-\psi_h
\end{pmatrix}
+
\begin{pmatrix}\varphi_h-\varphi\\ 
\psi_h-\psi
\end{pmatrix}
.
\end{align*}
The first term is given by 
\begin{align*}
\begin{pmatrix}\varphi^{\tau}_h-\varphi_h\\
\psi^{\tau}_h-\psi_h
\end{pmatrix}
=A_h^{-1}(\pt^\tau)P_h \begin{pmatrix}
0\\
-F(\pt^\tau)\pt^\tau \gamma u^\text{inc}+\pn u^\text{inc}
\end{pmatrix}
-A_h^{-1}(\pt)P_h \begin{pmatrix}
0\\
-F(\pt)\pt \gamma u^\text{inc}+\pn u^\text{inc}
\end{pmatrix}.
\end{align*}
In view of the stability bound \eqref{Ah-inv-bound}, the convolution quadrature error bound of Lemma~\ref{lem:cq-error} yields
$$
\max_{0\le n \le N}
\norm{\begin{pmatrix}
\varphi^\tau_h(t_n)-\varphi_h(t_n)\\
\psi^\tau_h(t_n)-\psi_h(t_n)
\end{pmatrix}}_{H^{-1/2}(\Gamma)\times X}
\le 
C \, \tau^p .
$$
The second term is the space discretization error, which is estimated by Theorem \ref{thm:err-bem}. Noting that
$H_0^{m}(0,T;\mX)$ is continuously embedded in $C([0,T],\mX)$ for $m>1/2$, we obtain the stated error bound pointwise in time.
\end{proof}

The  approximation to the scattered wave can then be obtained by  the discretized Kirchhoff representation formula \eqref{rep-uscat-t-tau}:
\begin{equation}\label{rep-uscat-t-tau-h}
\uscatau_h = S(\pt^\tau) \varphi^\tau_h + D(\pt^\tau)(\pt^\tau)^{-1} \psi^\tau_h.
\end{equation}
We have the following optimal-order error bounds.

\begin{theorem}\label{thm:err-cq-bem-uscat}
Under the conditions of Theorem~\ref{thm:err-cq-bem} and assuming sufficiently high regularity, the error in the approximation \eqref{rep-uscat-t-tau-h} to the scattered wave is bounded by
\bcl
\begin{align*}
\max_{0\le n \le N}
\norm{\uscatau_h(t_n)- \uscat(t_n) }_{H^{1}(\Omega)}
&\le 
C \, (\tau^p + h^{k+1/2}+\eps^{1/2}\, h^{k})
\qquad &\text{in case (A)},
\\
&\le 
C \, (\tau^p + h^{k+1/2}+\eps^{-1/2} h^{k+1})
\qquad &\text{in case (B)},
\\[2mm]
&\le C \, (\tau^p + h^{k+1/2})
\qquad\: &\text{in case (C)}.
\end{align*}
\ecl
The constant $C$ is independent $h$ and $\tau$ and $n$ with $t_n=n\tau\le T$, and of $\eps$ in case (A), but depends on $T$ and higher Sobolev norms of the incident wave $\gamma\uinc$ and $\pn\uinc$ and of the solution $(\varphi,\psi)$.

For $x\in\Omega$ with dist$(x,\Gamma)\ge \delta>0$, the pointwise error 
$\max_{0\le n \le N} \bigl| \uscatau_h(x,t_n)- \uscat(x,t_n) \bigr|$ 
has a bound of the same type, with a constant that depends additionally on $\delta$.
\end{theorem}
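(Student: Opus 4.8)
The plan is to estimate the error through the space-semidiscrete (but time-continuous) scattered wave \(u_h := S(\pt)\varphi_h + D(\pt)\pt^{-1}\psi_h\), where \((\varphi_h,\psi_h) = A_h^{-1}(\pt)P_h\,(0,-\ginc)^\top\) is the boundary-element Galerkin solution of \eqref{Asoperator-t} from Lemma~\ref{lem:galerkin}. Writing \(\uscatau_h-\uscat = (\uscatau_h-u_h) + (u_h-\uscat)\), the first term is a pure convolution-quadrature error on the fixed boundary element space, while the second is the pure space-discretization error already treated in Section~\ref{sec:bem}. The two contributions will be bounded by \(C\tau^p\) and by the \(h\)- and \(\eps\)-dependent terms of Theorem~\ref{thm:err-bem}, respectively, and then added.

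For the convolution-quadrature term I would invoke the operational calculus of Section~\ref{sec:cq} to write both \(\uscatau_h\) and \(u_h\) as a convolution quadrature, respectively a convolution, of one and the same operator symbol applied to the incident data \((\gamma\uinc,\pn\uinc)\). Folding the factor \(F(s)s\) that generates the right-hand side into the Kirchhoff representation and the discrete solve, the composition rule \(K(\pt^\tau)L(\pt^\tau)=(KL)(\pt^\tau)\) yields \(\uscatau_h = U_h(\pt^\tau)(\gamma\uinc,\pn\uinc)^\top\) and \(u_h = U_h(\pt)(\gamma\uinc,\pn\uinc)^\top\) with
\[
U_h(s) = \bigl(S(s),\,D(s)s^{-1}\bigr)\,A_h^{-1}(s)P_h\begin{pmatrix}0 & 0\\ -F(s)s & I\end{pmatrix}.
\]
The decisive point is that \(U_h(s)\) is polynomially bounded in \(|s|\) \emph{uniformly in \(h\)}: the potentials map \(\mX=H^{-1/2}(\Gamma)\times X\) into \(H^1(\Omega)\) with bound \(C_\sigma|s|\), the factor \(A_h^{-1}(s)P_h\) is bounded by \(\alpha_\sigma^{-1}|s|^2\) uniformly in \(h\) by \eqref{Ah-inv-bound}, and \(F(s)s\) is bounded by \(M_\sigma|s|^{\mu+1}\) by \eqref{F-bound}. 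Lemma~\ref{lem:cq-error} then gives \(\max_n\|\uscatau_h(t_n)-u_h(t_n)\|_{H^1(\Omega)}\le C_T\,\tau^p\) with a constant independent of \(h\), and, in case (A), independent of \(\eps\), since \(\alpha_\sigma\) and \(M_\sigma\) are \(\eps\)-independent there by Remark~\ref{rem:gibc}.

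For the space-discretization term I would write \(u_h-\uscat = S(\pt)(\varphi_h-\varphi) + D(\pt)\pt^{-1}(\psi_h-\psi)\), i.e.\ the Kirchhoff operator applied to the boundary-data error. Using the same potential bounds (and \(X\subset H^{1/2}(\Gamma)\) with embedding constant one in cases (A)--(C)), this operator maps \(H^m_0(0,T;\mX)\) into \(H^{m-1}_0(0,T;H^1(\Omega))\), which embeds into \(C([0,T];H^1(\Omega))\) once \(m>3/2\). Taking \(m\ge 2\) in Theorem~\ref{thm:err-bem} therefore bounds \(\max_n\|u_h(t_n)-\uscat(t_n)\|_{H^1(\Omega)}\) by exactly the stated \(h^{k+1/2}+\eps^{1/2}h^k\), \(h^{k+1/2}+\eps^{-1/2}h^{k+1}\) and \(h^{k+1/2}\) in the cases (A), (B) and (C). Adding the two contributions gives the asserted \(H^1(\Omega)\) estimate.

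The pointwise-in-space estimate for \(\mathrm{dist}(x,\Gamma)\ge\delta\) follows by repeating both steps with the \(H^1(\Omega)\)-norm bounds of the potentials replaced by their pointwise evaluations at \(x\), which are polynomially bounded in \(|s|\) with a \(\delta\)-dependent constant (cf.\ \eqref{uscat-x} and Lemma~6 in \cite{BLM11}); this changes only the power of \(|s|\) entering Lemma~\ref{lem:cq-error} and the time-regularity required of the data, not the orders \(\tau^p\) and \(h^{k+1/2}\). I expect the main obstacle to be the convolution-quadrature term: one must verify that the composite symbol \(U_h(s)\) is bounded \emph{uniformly in \(h\)} — which rests on the \(h\)-independent inverse bound \eqref{Ah-inv-bound} furnished by discrete coercivity — and that the operational calculus legitimately identifies the fully discrete wave with \(U_h(\pt^\tau)\) acting on the incident data, so that the single error estimate of Lemma~\ref{lem:cq-error} applies without reintroducing any \(h\)-dependence. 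The space-discretization term, by contrast, is an essentially immediate consequence of Theorem~\ref{thm:err-bem} and the mapping properties of the potentials.
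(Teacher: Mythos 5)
Your proposal is correct, but it takes a genuinely different route from the paper's proof. The paper splits the error at the level of the boundary densities, term by term in the representation formula: for instance $S(\pt^\tau)\varphi^\tau_h - S(\pt)\varphi = S(\pt^\tau)(\varphi^\tau_h-\varphi) + \bigl(S(\pt^\tau)\varphi - S(\pt)\varphi\bigr)$, where the second summand is a plain convolution quadrature error (Lemma~\ref{lem:cq-error}), while the first requires two extra tools: the full-discretization density error measured in the $H^2_0(0,T;H^{-1/2}(\Gamma))$ norm (obtained by invoking Lemma~\ref{lem:cq-error-Hm} together with Theorem~\ref{thm:err-bem}), and the $\tau$-uniform stability of the discrete potential $S(\pt^\tau)$ as a map into $C([0,T];H^1(\Omega))$ (Lemma~\ref{lem:cq-stab}). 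You instead split through the space-semidiscrete, time-continuous wave $u_h$: your first term $\uscatau_h-u_h$ is a pure CQ error for the composite symbol $U_h(s)$, which is legitimate because the composition rule of the (discrete) operational calculus identifies both $\uscatau_h$ and $u_h$ as convolutions with $U_h$ acting on the incident data, and because \eqref{Ah-inv-bound} makes $U_h(s)$ polynomially bounded uniformly in $h$ (and uniformly in $\eps$ for (A)); your second term $u_h-\uscat$ is the space-discretization error pushed through the continuous potentials via \eqref{sobolev-bound} and Theorem~\ref{thm:err-bem} with $m\ge 2$. What your route buys: it dispenses with Lemma~\ref{lem:cq-stab} and Lemma~\ref{lem:cq-error-Hm} entirely, needing only one application of Lemma~\ref{lem:cq-error}, and it directly parallels the semidiscrete proof of Theorem~\ref{thm:semidisconv-uscat} with $A_h^{-1}P_h$ in place of $Z=A^{-1}$. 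What it costs: the composite symbol carries a higher polynomial degree (roughly $|s|^{\mu+4}$), so Lemma~\ref{lem:cq-error} demands correspondingly higher time regularity of $\gamma\uinc$ and $\pn\uinc$ than the paper's splitting --- harmless under the theorem's ``sufficiently high regularity'' hypothesis, but worth making explicit. Both proofs handle the pointwise-in-space bound identically, replacing $S(s),D(s)$ by the pointwise operators $S_x(s),D_x(s)$ bounded via \cite{BLM11} and \cite{BL19}.
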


\begin{proof} We study the error of $S(\pt^\tau) \varphi^\tau_h$. The second term in the representation formula is estimated in an analogous way. We write the error as
\begin{equation}\label{S-err}
S(\pt^\tau) \varphi^\tau_h - S(\pt) \varphi = S(\pt^\tau) (\varphi^\tau_h -\varphi) + \bigl( S(\pt^\tau) \varphi - S(\pt) \varphi \bigr).
\end{equation}
The error $\varphi^\tau_h -\varphi$ can be bounded not only in the maximum norm in time, as we did in Theorem~\ref{thm:err-cq-bem}, but there is also a bound of the same order of approximation in the $H^2_0(0,T;H^{-1/2}(\Gamma))$ norm. This error bound is obtained by using Lemma~\ref{lem:cq-error-Hm} instead of Lemma~\ref{lem:cq-error}. By Lemma~\ref{lem:cq-stab} applied with $S(s)$ in the role of $K(s)$ (with $\mu=1$), we then have 
$$
\max_{0\le t \le T} \| S(\pt^\tau) (\varphi^\tau_h -\varphi)(t) \|_{H^1(\Omega)} \le C_0 \| \varphi^\tau_h -\varphi \|_{H^2_0(0,T;H^{-1/2}(\Gamma))} \le
C \, (\tau^p + h^{k+1/2}+\eps^{1/2}\, h^{k})
$$
in the case (A), \bcl with the term $\eps^{-1/2}\, h^{k+1}$ in case (B), and without the term $\eps^{1/2}\, h^{k}$ in the case  (C).\ecl
The other term in \eqref{S-err} is just a convolution quadrature error, and with Lemma~\ref{lem:cq-error} we obtain
$$
\| S(\pt^\tau) \varphi - S(\pt) \varphi \|_{H^1(\Omega)} \le C \, \tau^p.
$$
Using the same arguments for the error of $D(\pt^\tau)(\pt^\tau)^{-1} \psi^\tau_h$ and combining the error bounds yields the stated $H^1(\Omega)$-norm error bound for $\uscatau_h$. 

The pointwise error bound in $x\in \Omega$ is obtained in the same way,
using the pointwise operators defined by
$S_x(s)\varphi = (S(s)\varphi)(x)$ and $D_x(s)\psi = (D(s)\psi)(x)$. These are linear operators
$S_x(s):H^{-1/2}(\Gamma)\to \mathbb{C}$ and 
$D_x(s):H^{1/2}(\Gamma)\to \mathbb{C}$
that are bounded for $\Re s\ge\sigma >0$ and dist$(x,\Gamma)\ge \delta>0$ by $C_{\sigma,\delta} |s|$; see \cite{BLM11}, Lemma~6,
and \cite{BL19}, formulas (5.12)--(5.13). The same arguments then apply.
\end{proof}

\section{Numerical experiments}
\label{sec:num}
 All numerical experiments were conducted using continuous piecewise linear boundary element functions in space and convolution quadrature based on the second-order backward difference formula in time.
 The codes were written in Python and made use of the implemented boundary integral operators in the C++ library Bempp (see \cite{Bempp}).
\subsection{Spherically symmetric scattering: an example with an accurate reference solution}

For our first numerical example we choose $\Omega$ to be the exterior of the unit sphere. We consider a spherically symmetric incident wave $\uinc$ on the interval $[0,4]$ given by
\begin{align*}
\uinc(x,t)=\dfrac{e^{-5(\norm{x}-(3-t))^2}}{\norm{x}}.
\end{align*}
Since constant functions are eigenfunctions of the boundary operators, (see \cite{N01}), and also of the transfer operator of the generalized boundary condition in the cases (A)--(C), the scattered wave $u$ and the corresponding boundary densities $(\varphi,\psi)$ will then be constant on the unit sphere, i.e.~spherically symmetric.   \LB{Therefore, recalling that $\psi = \partial_t u$ on $\Gamma$, we have that the scattered field is given by
\begin{equation}
  \label{eq:exact_sphere}
  u(x,t) = \frac1{\norm{x}} u(y,t-(\norm{x}-1)) = \frac1{\norm{x}} \partial_t^{-1}\psi(y,t-(\norm{x}-1)), \qquad \text{for all } \norm{y} = 1.
\end{equation}}


\noindent To construct a reference solution, we now eliminate $\varphi$ in \eqref{Asoperator-t} and obtain
\begin{align}\label{L_form}
\left(L(\pt)+F(\pt)\right)\psi=-F(\pt) \pt\uinc +\pn \uinc,
\end{align}
where the corresponding operator $L(s)$ in the frequency domain is the scaled exterior Dirichlet-to-Neumann operator 
\begin{align}
L(s)=s^{-1}\left(W(s)-(\dfrac{1}{2}I-K^T(s))V(s)^{-1}(\dfrac{1}{2}I-K(s))\right) = - s^{-1} \mathrm{DtN}(s).
\end{align}
The eigenvalue of this operator that corresponds to constant functions is given by (see \cite{N01})
\begin{align}\label{EigL}
L(s)\wpsi=\left(1+\dfrac{1}{s}\right)\wpsi.
\end{align}
Now, we can easily discretize \eqref{L_form} in time to compute a reference density $\psi^\text{ref}$ of arbitrary precision (since no space discretization is needed).  Then, making use of \eqref{eq:exact_sphere} yields an approximation of arbitrary precision to the scattered wave in a given point $P\in \Omega$. In our experiments we used $N=2^{16}$ time steps to obtain a reference solution.


\newpage We present results for the second order absorbing boundary condition (B2), with $\varepsilon=10^{-2}$.
In view of the fact that the proof of Theorem \ref{thm:err-cq-bem-uscat} also implies a pointwise error bound, we use a fixed point (in our experiments $P=(2,0,0)$) and then take the maximum error in time, i.e.

\begin{align*}
\max_{0\le n \le N}
\abs{\uscatau_h\left(P,t_n\right)- \uscat^\text{ref}\left(P,t_n\right) }
\end{align*}
 as a function of the time stepsize $\tau$ and the spatial mesh width $h$. 
 
Employing our fully discrete scheme with varying mesh sizes $h_j=2^{-j}$ for $j=0,...,4$ and number of time steps $N_j=2^{j}$ for $j=4,...,11$ yields the convergence plots shown in Figures \ref{fig:time_conv} and~\ref{fig:space_conv}. 
 
 \begin{figure}[h!]
 \label{Fig1}
 \centering
 	\includegraphics[trim = 1mm 1mm 1mm 1mm, clip,width=0.8\textwidth,height=0.55\textwidth]{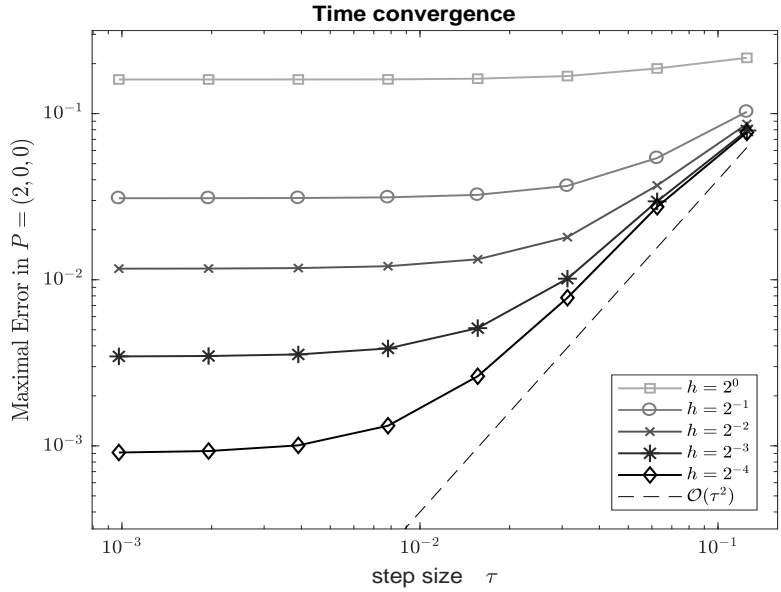}
 \caption{Time convergence plot of the fully discrete system, with varying degrees of freedom (dof) in space. }
 \label{fig:time_conv}
 \end{figure}

 \begin{figure}[h!]
 \label{Fig2}
 \centering
 	\includegraphics[trim = 1mm 1mm 1mm 1mm, clip,width=0.8\textwidth,height=0.55\textwidth]{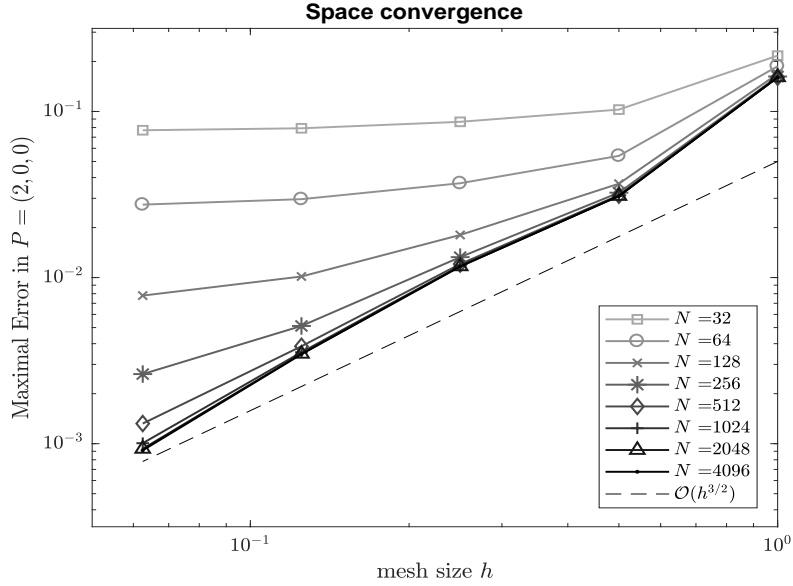}
 \caption{Space convergence plot of the fully discrete system, with varying number of time steps $N$.}
  \label{fig:space_conv}
 \end{figure}




\newpage
\subsection{Scattering of a plane wave from a halfpipe for different boundary conditions}
In the second example, we consider the scattering from a "halfpipe" shape, with a length of 1, a width of 0.5 and a height of 0.5 (as seen from above in Figure \ref{fig:wave_frames}). We discretize the surface with a grid consisting of about $10^3$ nodes. 
The incident wave is chosen as a plane wave, given by
\begin{align*}
\uinc(x,t):=e^{-100(x\cdot a-(t-t_0))^2}
\end{align*}
with $t_0=1$ and $a=(0,-1,0)$.

We employ the three generalized impedance boundary conditions (A),(B1),(C) and choose the corresponding parameters $\eps=10^{-1}$ in (A), $\eps=10^{-1}$ in (B1) and $m=\alpha=k=1$ in (C).
%
%

\begin{figure}[h!]
	\centering
\includegraphics[trim = 10mm 1mm 15mm 2mm, clip,width=1.0\textwidth,height=0.3\textwidth]{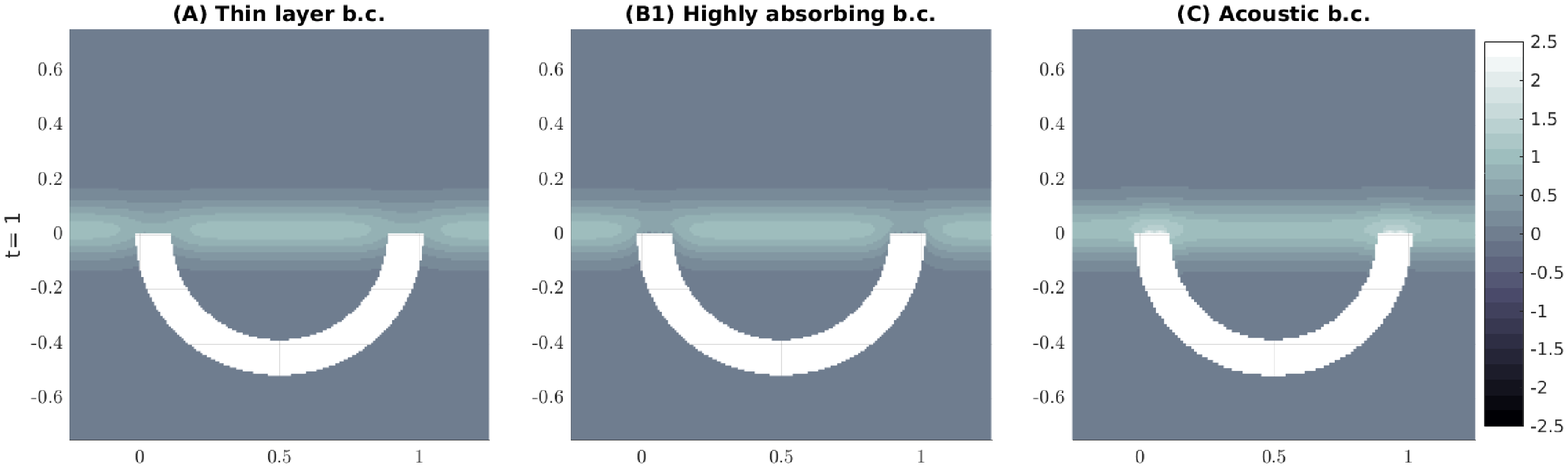}

\includegraphics[trim = 10mm 1mm 15mm 8mm, clip,width=1.0\textwidth]{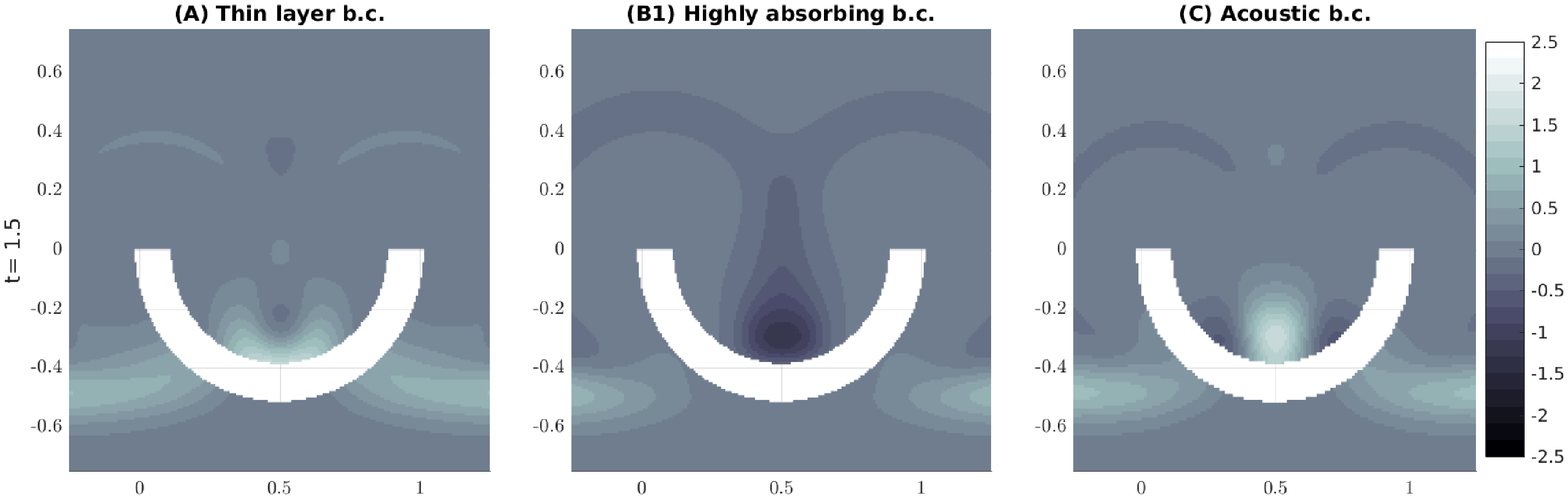}

\includegraphics[trim = 10mm 1mm 15mm 8mm, clip,width=1.0\textwidth]{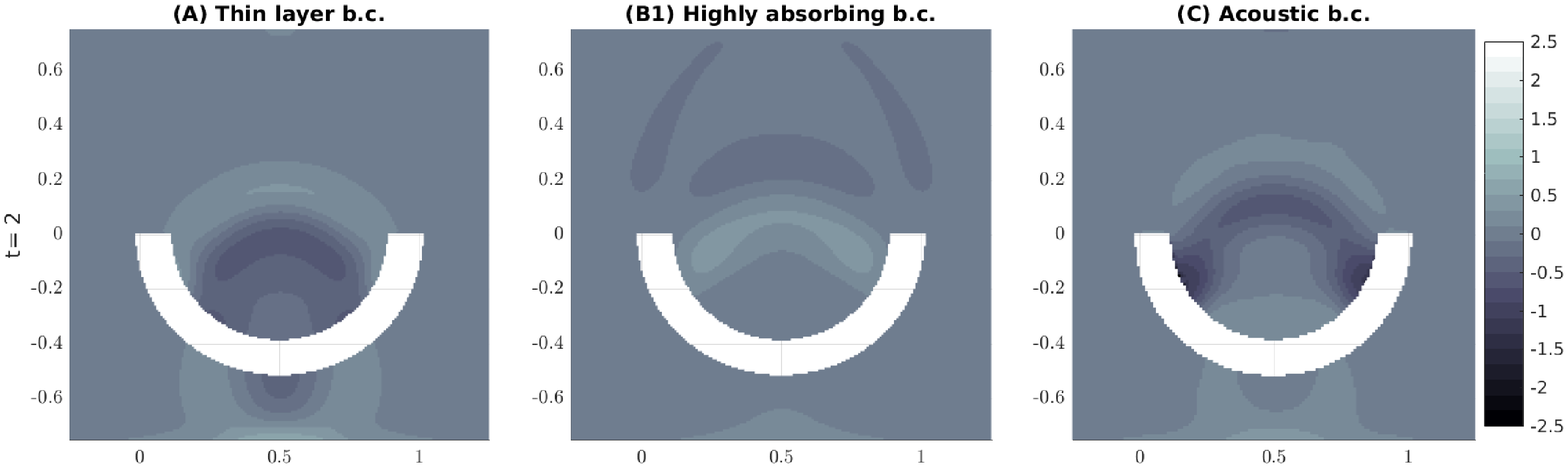}

\includegraphics[trim = 10mm 1mm 15mm 8mm, clip,width=1.0\textwidth]{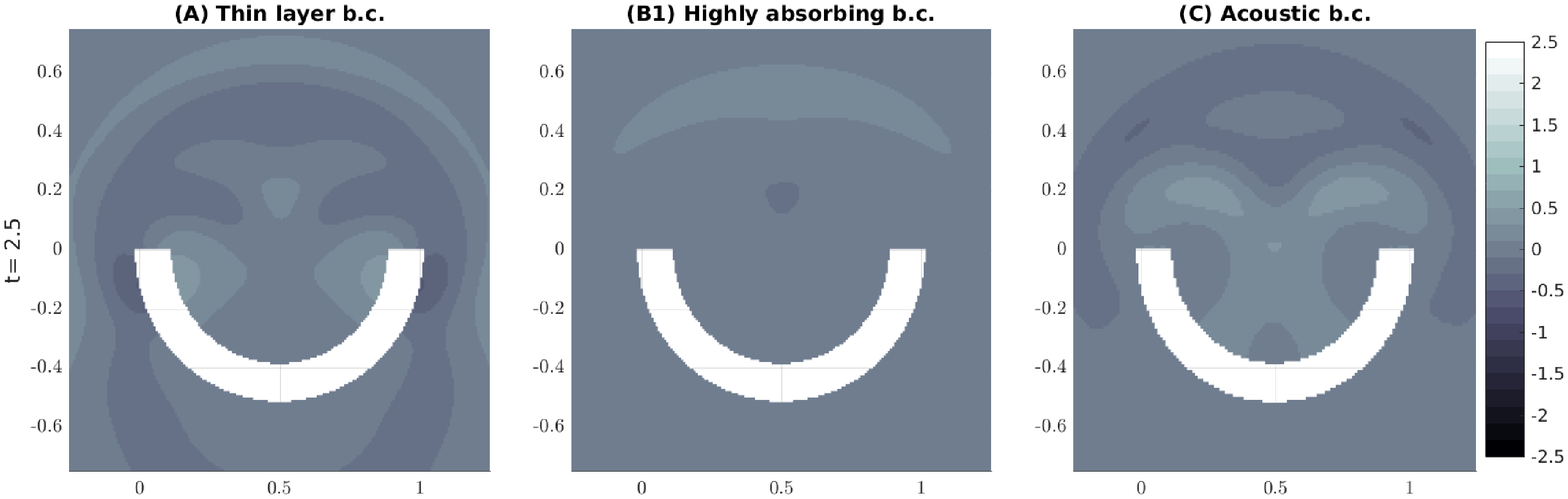}

\caption{Scattering of a plane wave from a "halfpipe" shape, with height $0.5$, where each column represents a different boundary condition. Shown is the plane $z=0.25$, the middle of the scatterer.}
\label{fig:wave_frames}
\end{figure}

\subsection{Acknowledgement}
We thank Bal\'azs Kov\'acs for helpful discussions. The second and third authors are funded by the Deutsche Forschungsgemeinschaft (DFG, German Research Foundation) -- Project-ID 258734477 -- SFB 1173.

\newpage 

  \bibliography{Lit}
 \bibliographystyle{authordate1}
  \addcontentsline{toc}{section}{Literature}

\normalsize
\end{document}